\newcommand{\R}{\mathbf{R}}
\newcommand{\Z}{\mathbf{Z}}
\newcommand{\OX}{\mathcal{O}_X}
\newcommand{\Ok}{\mathcal{O}_k}
\newcommand{\Ahat}{\widehat{A}}
\newcommand{\Bhat}{\widehat{B}}
\newcommand{\Chat}{\widehat{C}}
\newcommand{\Rhat}{\widehat{R}}
\newcommand{\Shat}{\widehat{S}}
\newcommand{\Uhat}{\widehat{U}}
\DeclareMathOperator{\Spa}{Spa}
\theoremstyle{plain}
\newtheorem{theorem}{Theorem}
\newtheorem{lemma}[theorem]{Lemma}
\newtheorem{corollary}[theorem]{Corollary}
\newtheorem{proposition}[theorem]{Proposition}
\theoremstyle{remark}
\newtheorem{remark}[theorem]{Remark}
\newtheorem*{remarkn}{Remark}
\begin{document}

\title{Stably uniform affinoids are sheafy}

\begin{abstract}
We develop some of the foundations of affinoid pre-adic spaces without
Noetherian or finiteness hypotheses. We give explicit examples of
non-adic affinoid pre-adic spaces, and also a new condition ensuring that
the structure presheaf on $\Spa(R,R^+)$ is a sheaf. This condition can
be used to give a new proof that the spectrum of a perfectoid algebra
is an adic space.
\end{abstract}

\author{Kevin Buzzard}\email{k.buzzard@imperial.ac.uk}\address{Department of
Mathematics, Imperial College London}
\author{Alain Verberkmoes}
\maketitle

\thispagestyle{empty}

\section{Introduction}

Let $k$ be a field complete with respect to a non-trivial
non-archimedean norm $k\to\R_{\geq0}$. If $(R,R^+)$ is an affinoid $k$-algebra
(in the sense of Definition~2.6(ii) of~\cite{scholze:perfectoid}) then we can
associate to it a certain topological space $X:=\Spa(R,R^+)$ whose elements
are certain valuations on $R$. This topological space -- a so-called
affinoid pre-adic space -- has a natural
presheaf of complete topological rings $\OX$ on it. The presheaf is known to be a sheaf
if $R$ satisfies certain finiteness conditions. For example, it
is a sheaf if $R$ is a quotient of
a Tate algebra $k\langle T_1,T_2,\ldots,T_n\rangle$ (that is, the ring
of of power series which converge on the closed unit polydisc), and
there are other finiteness conditions which also suffice to guarantee
$\OX$ is a sheaf. 
These finiteness conditions are imposed very early on
in~\cite{huber:book} (Assumption~(1.1.1)), which is mainly concerned
with the theory of \'etale cohomology in the context of rigid spaces.
However, more recently Scholze has introduced the concept of a
perfectoid $k$-algebra, for which these finiteness conditions essentially
never hold. Scholze showed in Theorem~6.3(iii) of~\cite{scholze:perfectoid}
that for $R$ perfectoid, $\OX$ is still
a sheaf. His proof is delicate, involving a direct calculation
in characteristic~$p$ and then some machinery (almost mathematics,
tilting) to deduce the result in characteristic zero.
However it would be technically useful in some applications
to have a more general
method. For example, in Conjecture~2.16 of~\cite{scholze:survey} Scholze
asks the following question: if $X$ can be covered by rational subsets
which are perfectoid, then is $X$ perfectoid? This unfortunately
turns out not to be true in this generality, because there are examples
of locally perfectoid affinoid pre-adic spaces $X$ where $\OX$ fails to be
a sheaf and hence $X$ cannot be perfectoid. This raises the general question
of what extra assumptions one should put on~$X$ in order to hope
that one can check that it is perfectoid via local calculations.
However, except for an example by Mihara (\cite{mihara}) posted to the
ArXiv whilst this paper was being typed up, there seem to be no examples in
the literature at all of affinoid $k$-algebras for which $\OX$ is not a sheaf and
in general the problem seems to be very poorly-understood (or at least
poorly-documented).

In this paper we give some examples of affinoid $k$-algebras for
which $\OX$ is not a sheaf, and show that the phenomenon is strongly
linked to the issue that the set of power-bounded elements in an affinoid
ring may not be bounded. On the other hand, we show that if every rational
subset of $\Spa(R,R^+)$ has the property that all power-bounded elements
are bounded, then $\OX$ is a sheaf (with no finiteness or perfectoid
assumptions). Scholze has used this latter result to verify sheafiness
for certain constructions underlying his new theory of diamonds.

Note finally that if one drops the finiteness conditions that Huber imposes
then one might not expect a reasonable theory of coherent sheaves; this
is an issue even in the perfectoid space setting. For example
it seems to be currently unclear whether an open immersion of affinoids
induces a flat morphism on rings of global sections in this generality.

\subsection*{Acknowledgements.}

KB would like to thank Torsten Wedhorn for his notes on adic spaces,
which he found a very useful introduction to the subject,
Peter Scholze for encouragement and guiding comments, and
AV for inviting him to Hakkasan, which began this collaboration.
Both authors would like to thank Brian Conrad for his detailed
comments on the manuscript, and also for urging the authors to write
the argument in the general setting of Tate rings.

\section{Definitions}

We recall from section~1 of~\cite{huber:valuations} that a {\em Tate ring} 
is an f-adic ring that has a topologically
nilpotent unit.
More concretely, it is a topological ring that can be
obtained as follows.  Let $R$ be a ring, $R_0$ a sub-ring of $R$, and
$\varpi\in R_0\cap R^\times$ such that $R=R_0[\varpi^{-1}]$. The subsets
$r+\varpi^nR_0$ with $r\in R$ and
$n\geq0$ form the basis of a topology on
$R$ and the resulting topological ring is a Tate ring (see \cite{huber:valuations}
Proposition~1.5). Note that without the condition $R=R_0[\varpi^{-1}]$,
multiplication in $R$ would not be continuous.
We will refer to this topology on $R$ as the topology induced by the
subring $R_0$ and the ideal $(\varpi)$ of $R_0$.
In general, there are different
choices for $R_0$ and $\varpi$ that lead to the same topology on~$R$.

Here is a construction of Tate rings which will be used in the later
stages of this paper. Let $k$ be a field complete with
respect to a non-trivial non-archimedean norm $|.|:k\to\R_{\geq0}$, let~$\Ok$
be its integer ring, and let~$\varpi$ be any element of $k^\times$ with
$0<|\varpi|<1$. A \emph{Tate $k$-algebra} (as in Definition~2.6 of~\cite{scholze:perfectoid}) is a $k$-algebra~$R$ equipped with an $\Ok$-subalgebra~$R_0$
such that $kR_0=R$. All the explicit examples of Tate rings appearing
in section~\ref{counterexamples} in this paper are Tate $k$-algebras.

An element $r$ of a Tate ring $R$ is called \emph{power-bounded} if there
exists some $n\geq0$ such that $r^m\in\varpi^{-n}R_0$ for all $m\geq0$
(this property depends only on the topology on~$R$ rather
than the explicit choice of $R_0$ and~$\varpi$).
The set~$R^\circ$
of power-bounded elements is an open and integrally closed subring of~$R$
containing $R_0$. If $R^+\subseteq R^\circ$ now denotes an arbitrary open
and integrally closed 
subring of~$R$ (for example, $R^+=R^\circ$)
then the resulting pair $(R,R^+)$ is called a \emph{Tate affinoid ring},
and to this pair one can associate a topological space $X=\Spa(R,R^+)$
whose elements are (equivalence classes of) continuous valuations on~$R$
which are bounded by~1 on~$R^+$. The space~$X$
is furthermore endowed with a presheaf $\OX$ of complete topological rings,
and the question this paper is mainly concerned with, is when this presheaf
is a sheaf.

The first sections of~\cite{huber:valuations} and~\cite{huber:generalization}
give careful definitions of~$X$ and $\OX$; other good references
are~\cite{wedhorn} and
(for the case of $k$-algebras)~\cite{scholze:perfectoid}.
We summarize here the facts that we will need. Firstly we
mention some basic results about completions for which we could find
no easily-accessible reference. Let $R$ be a Tate ring, with $R_0$ and $\varpi$
as above. We can complete~$R$ with
respect to the topology defined by $R_0$ and $(\varpi)$; the
completion, denoted $\Rhat$, is the limit $\varprojlim_nR/\varpi^nR_0$,
endowed with the projective limit topology (the quotients $R/\varpi^nR_0$
have the discrete topology). There is a canonical map $i:R\to\Rhat$.
If $U\subseteq R$ is an open subgroup of the group $(R,+)$
then let $\Uhat$ denote the closure
of $i(U)$ in $\Rhat$ (this is just notation, but it is reasonable
because the closure
of $i(U)$ is isomorphic to the completion of~$U$ in the
sense of~\cite{bourbaki:topgen}, by
\cite{bourbaki:topgen} II,
\S3.9 Corollaire 1). 

\begin{lemma}\label{basic}
In the situation described above:
\begin{enumerate}[(i)]
\item The subgroups $\widehat{\varpi^n R_0}=\varpi^n\widehat{R_0}$ form a basis
of open neighbourhoods of the origin of $\Rhat$.
\item If $U$ is an open subgroup of $R$ then $i^{-1}(\Uhat)=U$.
\item $\Rhat$ naturally has the structure of a Tate ring, with topology
induced from the subring $\widehat{R_0}$ and element $i(\varpi)$;
moreover $\widehat{R^\circ}=(\Rhat)^\circ$ (i.e., completion commutes with taking
power-bounded elements).
\end{enumerate}
\end{lemma}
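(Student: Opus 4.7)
The plan is to tackle the three parts in order. Before anything, I would invoke the standard fact that the completion of a topological ring with continuous operations is again a topological ring, so $\Rhat$ is a ring, $i$ is a ring homomorphism, and $\widehat{R_0}$ is a subring of $\Rhat$. For (i), I would then use the description $\Rhat = \varprojlim_n R/\varpi^n R_0$ with the projective-limit topology and each $R/\varpi^n R_0$ discrete: a basis of open neighbourhoods of $0$ in $\Rhat$ is given by the kernels $K_n$ of the projections $\Rhat \to R/\varpi^n R_0$. The identification $K_n = \widehat{\varpi^n R_0}$ is a direct check --- the inclusion $\supseteq$ is continuity, while the reverse amounts to observing that any coherent sequence $(s_m)$ with $s_n = 0$ can, at each level $m' > n$, be matched by lifting $s_{m'}$ from $\varpi^n R_0/\varpi^{m'} R_0$ to an element of $\varpi^n R_0$. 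The remaining equality $\widehat{\varpi^n R_0} = \varpi^n \widehat{R_0}$ follows because multiplication by $\varpi^n$ is a homeomorphism of $R$ (with continuous inverse $\varpi^{-n}\cdot$) that extends to a homeomorphism of $\Rhat$ carrying $\widehat{R_0}$ onto $\widehat{\varpi^n R_0}$. Part (ii) is then immediate: given $r$ with $i(r) \in \Uhat$, pick $m$ with $\varpi^m R_0 \subseteq U$ and use density of $i(U)$ in $\Uhat$ to find $u \in U$ with $i(r) - i(u) \in \widehat{\varpi^m R_0}$; by (i) this gives $r - u \in \varpi^m R_0 \subseteq U$, so $r \in U$.

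For (iii), I would first show $\Rhat$ is Tate. A Cauchy-net argument (any Cauchy net is eventually within $R_0$ of a fixed element of $R$, hence inside some $\varpi^{-N} R_0$) shows that every $x \in \Rhat$ satisfies $\varpi^N x \in \widehat{R_0}$ for some $N$, so $\Rhat = \widehat{R_0}[i(\varpi)^{-1}]$; the element $i(\varpi)$ is a unit since $\varpi$ is, and by (i) the topology on $\Rhat$ is exactly that induced by $\widehat{R_0}$ and $i(\varpi)$. For the equality $\widehat{R^\circ} = (\Rhat)^\circ$, first note that $\widehat{R_0}$ is bounded (from $\widehat{R_0}\cdot \varpi^n \widehat{R_0} \subseteq \varpi^n\widehat{R_0}$) and hence contained in $(\Rhat)^\circ$, and that $r \in R^\circ$ is equivalent to $i(r) \in (\Rhat)^\circ$, since both conditions unwind via (ii) to the existence of some $N$ with $\varpi^N r^m \in R_0$ for all $m \geq 0$. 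For $\widehat{R^\circ} \subseteq (\Rhat)^\circ$, any $x \in \widehat{R^\circ}$ may be written as $i(r) + (x - i(r))$ with $r \in R^\circ$ and $x - i(r) \in \widehat{R_0}$, and both summands lie in the subring $(\Rhat)^\circ$. For $\supseteq$, given $x \in (\Rhat)^\circ$ and $n \geq 0$, density of $i(R)$ produces $s \in R$ with $y := x - i(s) \in \varpi^n \widehat{R_0}$; then $i(s) = x - y \in (\Rhat)^\circ$, so $i(s) \in i(R) \cap (\Rhat)^\circ = i(R^\circ)$, whence $s \in R^\circ$ (using $\ker i \subseteq R_0 \subseteq R^\circ$), and $x$ lies arbitrarily close to $i(R^\circ)$.

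The main obstacle is the bookkeeping in (i): once the identifications $K_n = \widehat{\varpi^n R_0} = \varpi^n \widehat{R_0}$ are nailed down and $\widehat{R_0}$ is recognised as an open bounded subring of $\Rhat$, the rest of the lemma is essentially formal. The one pleasing moment in (iii) is the observation that, because $(\Rhat)^\circ$ is a subring containing both $\widehat{R_0}$ and $i(R^\circ)$, the density of $i(R)$ in $\Rhat$ yields both inclusions of the final equality almost on autopilot.
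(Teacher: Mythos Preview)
Your proof is correct and follows essentially the same strategy as the paper's, with the main difference being one of style: where the paper cites Bourbaki (for the neighbourhood basis in (i), the ring structure on $\Rhat$, and the universal property in (ii)), you give direct arguments via the projective-limit description and density. In particular, your density argument for (ii) replaces the paper's use of the universal property of completion, and your observation in (iii) that $(\Rhat)^\circ$ is a subring containing both $i(R^\circ)$ and $\widehat{R_0}$ is a slightly cleaner packaging of what the paper does with the binomial theorem; otherwise the two proofs line up step for step.
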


\begin{proof}
(i)
The closure~$\overline{\{0\}}$ of $\{0\}\subseteq R$
is easily checked to be $\cap_n\varpi^nR_0$, so the result
follows by applying \cite{bourbaki:topgen} III \S3.4 Proposition~7
to $R$ modulo this closure, noting that any subgroup containing
an open subgroup is open.

(ii) Clearly $i^{-1}(\Uhat)$ contains $U$. Conversely,
the universal property of the completion (\cite{bourbaki:topgen} III, \S3.4,
Proposition~8)
gives us a group homomorphism $\Rhat\to R/U$ through which the canonical
map $R\to R/U$
factors. The kernel $K$ of $\Rhat\to R/U$ has the property that it
contains $i(U)$ and hence its closure, but also that $i^{-1}(K)=U$. This
shows $i^{-1}(\Uhat)=U$. 

(iii) $\Rhat$ and $\widehat{R_0}$ are rings by
\cite{bourbaki:topgen} III, \S6.5 Proposition~6 and II, \S3.9 Corollaire 1.
The topology on $\Rhat$ is induced from $\widehat{R_0}$ by
\cite{bourbaki:topgen} III \S3.4 Proposition~7. By part (i) $i(R^\circ)$
consists of power-bounded elements, and $(\Rhat)^\circ$ is open
so it contains $\widehat{R^\circ}$. Conversely, say $\hat{r}\in\Rhat$
is power-bounded.
Because $i(R)$ is dense in $\Rhat$ and $(\Rhat)^\circ$ is open,
for any $n\geq0$ we may find $r_n\in R$ such that
$\hat{r}-i(r_n)\in\varpi^n\widehat{R_0}$, and one checks
using the binomial theorem that $i(r_n)$ is power-bounded
and hence (using (ii)) that $r_n$ is too. Hence $\hat{r}\in\widehat{R^\circ}$.
\end{proof}

We now return to our description of $\OX$. The ring $\OX(X)$ is not~$R$,
but the completion $\Rhat$ of $R$ with respect
to the topology induced by $R_0$ and $(\varpi)$.
Let us now describe $\OX$ on certain open subsets of $X$.
Choose $t\in R$. Then we can cover
$X$ by two open subsets $U:=\{x:|t(x)|\leq1\}$ and $V:=\{x:|t(x)|\geq1\}$
(where we make the standard abuse of notation: $x$ is a valuation on~$R$
and $|t(x)|$ is just another way of writing $x(t)$).
If the (still to be defined presheaf) $\OX$ is a sheaf of complete topological
rings then in particular it is
a sheaf of abelian groups and so the sequence
\begin{equation}
0\to\OX(X)\to\OX(U)\oplus\OX(V)\to\OX(U\cap V)\tag{$*$}
\end{equation}
of abelian groups is exact.
We will now describe these 
groups and homomorphims explicitly. The subsets $U$, $V$ and $U\cap V$
are rational subsets of $X$, so it is not hard to compute $\OX$
on them directly.

Set $A=R$ and $A_0=R_0[t]$. We topologize the ring~$A$ 
using $A_0$ and $\varpi A_0$ as above.  The space $\OX(U)$ is the
completion $\Ahat$ of $A$ with respect to this topology.
Set $B=R[1/t]$, the localization of~$R$ at the set
$\{1,t,t^2,t^3,\ldots\}$ obtained by inverting~$t$.
Let $\phi:R\to B$ denote the canonical map.
We set $B_0=\phi(R_0)[1/t]$ and topologize~$B$ using $B_0$ and $\varpi B_0$
as above. The
space $\OX(V)$ is the completion $\Bhat$ of~$B$.
Finally we set $C=B$ and $C_0=\phi(R_0)[t,1/t]$.
The space $\OX(U\cap V)$ is the completion $\Chat$ of~$C$.

The abstract rings $R$ and~$A$ coincide, but their topologies will not coincide
in general. More precisely, $R_0\subseteq A_0$ and hence the
identity map $R\to A$ 
is continuous, but if $A_0\not\subseteq\varpi^{-n}R_0$ for every $n\geq0$
then the identity map $A\to R$ is not.
(An example where this happens is when $k$ is a field complete with respect
to a non-trivial non-archimedean valuation, $\varpi\in k$ with $0<|\varpi|<1$,
and $R=A=k[T]$, $R_0=\Ok[\varpi T]$ and $t=T$ so $A_0=\Ok[T]$.)
Similarly, the identity map $B\to C$ is continuous but the identity
map $C\to B$ may not be. Also, $\phi:R\to B$ is continuous as
are the induced maps $\phi:R\to C$ and $\phi:A\to C$, but the induced
map $A\to B$ may not be.

Define $\epsilon:R\to A\oplus B$ by $\epsilon(r)=(r,\phi(r))$, and
define $\delta:A\oplus B\to C$ by $\delta(a,b)=b-\phi(a)$. One
checks easily that the sequence of abstract abelian groups
\begin{equation}
0\to R\xrightarrow{\epsilon}A\oplus B\xrightarrow{\delta}C\to 0\tag{$**$}
\end{equation}
is exact, and indeed it is naturally split, the map $C\to A\oplus B$
sending $c$ to $(0,c)$ being a splitting. However if we topologize
$R$, $A\oplus B$ and $C$ using $R_0$, $A_0\oplus B_0$ and $C_0$
respectively, then $\epsilon$ and $\delta$ are continuous
but the splitting may not be continuous.

The sequence $(*)$ whose exactness we care about consists of the
first three arrows in the completion of the sequence $(**)$
with respect to the topologies defined by $R_0$, $A_0\oplus B_0$
and $C_0$. The issue then, is whether taking
completions can destroy left exactness.

Before we embark on a discussion of this, we recall the notion of strictness.
A continuous map between topological
groups $\psi:V\to W$ is called \emph{strict} if the two topologies on 
$\psi(V)$, namely the quotient topology coming from $V$ and the subspace
topology coming from $W$, coincide.
We see that $\delta:A\oplus B\to C$
is strict, because it is a continuous surjection and the image of
$A_0\oplus B_0$
is $C_0$ so $\delta$ is open. On the other hand, $\epsilon$ is strict iff
$S_0:=A_0\cap\phi^{-1}(B_0)$ is bounded in $R$, which is not always the
case; we will see explicit examples of that later on.

The following lemma shows that exactness of $(*)$ is in fact
equivalent to strictness of~$\epsilon$.

\begin{lemma}\label{exact}
The following are equivalent:
\begin{enumerate}[(i)]
\item $(*)$ is exact,
\item $(*)$ is exact and furthermore the map $\OX(U)\oplus\OX(V)\to\OX(U\cap V)$
is surjective,
\item there exists some $n\geq0$ such that $\varpi^n(A_0\cap\phi^{-1}(B_0))\subseteq R_0$,
\item $\epsilon$ is strict (and hence all maps in $(**)$ are strict).
\end{enumerate}
\end{lemma}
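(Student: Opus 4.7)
Setting $S_0 := A_0 \cap \phi^{-1}(B_0)$, one computes $\epsilon^{-1}(\varpi^n A_0 \oplus \varpi^n B_0) = \varpi^n S_0$, so the subspace topology on $R$ inherited via $\epsilon$ from $A \oplus B$ has $\{\varpi^n S_0\}_n$ as a basis of neighbourhoods of the origin. Since $R_0 \subseteq S_0$, this topology is coarser than the $R_0$-adic one, and the two agree exactly when $\varpi^N S_0 \subseteq R_0$ for some $N \geq 0$; this gives (iii) $\Leftrightarrow$ (iv) directly. The implication (ii) $\Rightarrow$ (i) is trivial, so it remains to prove (iii) $\Rightarrow$ (ii) and (i) $\Rightarrow$ (iii).

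The main engine, which needs no hypothesis, is that the inverse system of sequences
\[
0 \to R/\varpi^n S_0 \to A/\varpi^n A_0 \oplus B/\varpi^n B_0 \to C/\varpi^n C_0 \to 0
\]
is exact with surjective transition maps as $n$ grows. Left injectivity is the preimage computation above; right surjectivity follows from the surjectivity of $\delta$; middle exactness is the observation that any Laurent polynomial $b - \phi(a) \in \varpi^n R_0[t,1/t]$ splits into $\phi(x) + y$ with $x \in \varpi^n A_0$ (its non-negative-power part) and $y \in \varpi^n B_0$ (its negative-power part), whence $(a,b) \equiv \epsilon(a+x) \pmod{\varpi^n A_0 \oplus \varpi^n B_0}$. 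Taking $\varprojlim_n$ preserves exactness and yields $0 \to \Shat \to \Ahat \oplus \Bhat \xrightarrow{\hat\delta} \Chat \to 0$ exact, where $\Shat := \varprojlim_n R/\varpi^n S_0$. Under (iii) the $R_0$-adic and $S_0$-adic topologies on $R$ agree, so $\Shat = \Rhat$ canonically, giving (ii).

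The hard implication is (i) $\Rightarrow$ (iii). The previous paragraph already shows that $\hat\epsilon$ always factors as $\Rhat \to \Shat \hookrightarrow \Ahat \oplus \Bhat$, and if $(*)$ is exact then the first arrow is a continuous bijection of complete metric topological groups. The plan is to prove this bijection is a homeomorphism by Baire category plus iteration (essentially a hands-on open mapping theorem). Writing $\Shat = \bigcup_{n \geq 0} \varpi^{-n}\hat\epsilon(\widehat{R_0})$ as a countable union of subgroups and passing to closures, Baire category forces $\overline{\hat\epsilon(\widehat{R_0})}$ to have non-empty interior, hence to be an open subgroup of $\Shat$, and therefore to contain $\varpi^N \widehat{S_0}$ for some $N$. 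A successive-approximation argument then upgrades this: given $\hat s \in \varpi^N \widehat{S_0}$, choose $\hat r_1 \in \widehat{R_0}$ with $\hat s - \hat\epsilon(\hat r_1) \in \varpi^{N+1}\widehat{S_0} \subseteq \overline{\hat\epsilon(\varpi\widehat{R_0})}$, then $\hat r_2 \in \varpi\widehat{R_0}$ handling the next order, and so on; the convergent series $\hat r := \sum_k \hat r_k \in \widehat{R_0}$ satisfies $\hat\epsilon(\hat r) = \hat s$, so $\varpi^N \widehat{S_0} \subseteq \hat\epsilon(\widehat{R_0})$. Intersecting with $R$ and using Lemma~\ref{basic}(ii) together with the injectivity of $\hat\epsilon$ yields $\varpi^N S_0 \subseteq R_0$, i.e.\ (iii). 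The main obstacle is this Baire-plus-iteration step, an ad hoc open mapping theorem tailored to the present non-separable setting; the rest is bookkeeping with inverse limits of short exact sequences.
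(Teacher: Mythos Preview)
Your proof is correct and follows the same architecture as the paper's: establish (iii)~$\Leftrightarrow$~(iv) via the subspace topology computation, show that the $S_0$-version of $(**)$ always completes to a short exact sequence, deduce (iv)~$\Rightarrow$~(ii), and for (i)~$\Rightarrow$~(iii) observe that exactness of $(*)$ forces $\Rhat\to\Shat$ to be a continuous bijection and then invoke an open mapping theorem to conclude. The only real difference is one of packaging: where the paper cites Bourbaki's lemma that completion preserves strict exact sequences and cites an open mapping theorem for Tate rings, you instead unwind both by hand (the explicit inverse system with Mittag--Leffler, and the Baire-plus-successive-approximation argument). Both routes land on exactly the same pullback step via Lemma~\ref{basic}(ii). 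One cosmetic slip: in your middle-exactness check you write $\varpi^n R_0[t,1/t]$ where you mean $\varpi^n C_0=\varpi^n\phi(R_0)[t,1/t]$, but the splitting argument is unaffected.
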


\begin{proof} 
Let $S$ denote the ring~$R$ and define $S_0:=A_0\cap\phi^{-1}(B_0)$.
Topologize $S$ using $S_0$ in the usual way (note $\varpi\in S_0$).
Then $\epsilon:S\to A\oplus B$
is strict and the identity map $R\to S$ is a continuous bijection.
In particular, strictness of $\epsilon$ is equivalent to $R\to S$
being a homeomorphism, which is equivalent to $R_0$ being
open in~$S$. Hence (iii) and (iv) are equivalent.

Now (ii) implies (i) trivially. Furthermore, 
it is a general fact in this setting that for an exact sequence
with all morphisms strict, its completion
remains exact (see for example~\cite{bourbaki:commalg} III.2.12, Lemme 2,
or~\cite{bgr} Corollary 1.1.9/6 for the case of $k$-algebras).
Applying this to the strict surjection $\delta$ we deduce that
$\OX(U)\oplus\OX(V)\to\OX(U\cap V)$ is always surjective (in fact
this is not difficult to see directly),
and so (i) implies~(ii). Furthermore, if (iv) holds then every
map in $(**)$ is strict, so the completion of $(**)$ is still
exact, and hence (iv) implies~(ii).

It suffices to prove that (i) implies (iii). Note
that the converse of the ``strict implies completion exact'' result used
several times above is not true in general.
(For example, if $R=A=k[T]$, $R_0=\Ok[\varpi T]$ and $A_0=\Ok[T]$,
then $R\to A$ is injective and not strict, but
the induced map $\Rhat\to\Ahat$ is still injective.)
Clearly $$0\to S\to A\oplus B\to C\to 0$$
is exact and all the maps are strict, so the sequence remains exact
under completion, and if furthermore (i) holds we deduce that the map
$\Rhat\to\Shat$ induced by the continuous map $R\to S$ must be a bijection.
We now wish to invoke the open mapping theorem in this
generality. One can check that the argument
of~\cite{bourbaki:topvs} Chapter~I, \S3.3, Th\'eor\`eme~1 holds in
this slightly more general setting of Tate rings. Another reference
is~\cite{henkel:omt}, and in the $k$-algebra case
there is~\cite{bgr} \S2.8.1.
As a consequence
we deduce that $\Rhat\to\Shat$ is open. In particular the image of $\Rhat_0$
must
contain $\varpi^n\Shat_0$ for some $n\geq0$. Pulling back via the
natural map $R\to\Rhat$ and using Lemma~\ref{basic}(ii)
we conclude that $R_0$ must contain $\varpi^nS_0$, which is (iii).
\end{proof}

This lemma is used in two ways in the sequel. In the next section
we observe that if the power-bounded elements of~$R$ are bounded,
then condition (iii) of the lemma follows
(Corollary~\ref{uniformimpliesstrict}),
and hence we get
a criterion for checking the sheaf axiom for the cover $X=U\cup V$,
which we can turn (Theorem~\ref{stablyuniformimpliessheaf}) into
a criterion for checking that the presheaf $\OX$ on a Tate affinoid
pre-adic space is a sheaf. As a consequence (Corollary~\ref{perfectoid})
we get a new proof that $\OX$ is a sheaf if $X=\Spa(R)$ with $R$ perfectoid.

In Section~\ref{counterexamples} we construct rings where
part (iii) is violated,
and use them to build examples of Tate affinoid pre-adic spaces which are
not adic.

\section{A criterion for $\OX$ to be a sheaf on a 
Tate affinoid pre-adic space}

Let~$R$, $R_0$ and $\varpi$ be as before.
As usual we topologize~$R$ by letting $\varpi^nR_0$
for $n\geq0$ be a basis of open neighbourhoods of zero.
We recall that $R^\circ$ denotes the subring of
power-bounded elements of~$R$. The ring~$R$
is called \emph{uniform} if $R^\circ$ is bounded, in other words if
there exists some $n\geq0$ such that $R^\circ\subseteq\varpi^{-n}R_0$.
Examples of uniform rings include reduced affinoid algebras in
Tate's original sense (i.e., those which are topologically of finite
type over a field~$k$), and conversely any Tate $k$-algebra with a non-zero
nilpotent element $r$ such that $kr\not\subseteq R_0$ would be
a non-uniform ring, as $kr\subseteq R^\circ$.

The key lemma we need in this section is that if an element of $R$ is
locally in $R_0$ then it is globally power-bounded. This sounds
geometrically reasonable, and we now give an
elementary
algebraic proof. We first remind the reader that every open cover
of an affinoid pre-adic space
can be refined to a rational cover (see Lemma~\ref{covers}(i) below,
and just before that lemma for the definition of a rational cover).

\begin{lemma}\label{alain}
Let $R$ be a Tate ring, with $R_0$ and $\varpi$ as before.
Let $t_1, \dots, t_n$ in $R$ such that $t_1 R+\dots+t_n R=R$.
For each $i$, let $R[1/t_i]$ be the localization of $R$ at the multiplicative set
$\{1, t_i, t_i^2, \dots\}$ and $\phi_i: R \to R[1/t_i]$ the natural homomorphism.
Then
$$\bigcap_{i=1}^n \phi_i^{-1}\bigl(\phi_i(R_0)[t_1/t_i, \dots, t_n/t_i]\bigr) \subseteq R^\circ.$$
\end{lemma}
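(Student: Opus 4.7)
The plan is to exhibit, for each $i$, a polynomial identity $t_i^{M_i} r = G_i(t_1,\dots,t_n)$ in $R$ with $G_i \in R_0[T_1,\dots,T_n]$ homogeneous of degree $M_i$, and then to bootstrap these via the unit ideal hypothesis into a uniform bound on the powers of $r$. To obtain the identities, fix $i$: by hypothesis $\phi_i(r)$ is a polynomial of some degree $d_i$ in $t_1/t_i,\dots,t_n/t_i$ with coefficients in $\phi_i(R_0)$. Lifting those coefficients to $R_0$ and multiplying through by $t_i^{d_i}$ produces $h_i \in R_0[T_1,\dots,T_n]$ homogeneous of degree $d_i$ with $\phi_i(t_i^{d_i} r - h_i(t)) = 0$. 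Since the kernel of $\phi_i$ consists of elements killed by some power of $t_i$, multiplication by a suitable $t_i^{e_i}$ upgrades this to the genuine identity $t_i^{M_i} r = G_i(t)$ with $M_i := d_i + e_i$ and $G_i(T) := T_i^{e_i} h_i(T)$ homogeneous of degree $M_i$. After multiplying each $G_i$ by a further power of $T_i$, I may assume a common degree $M := \max_i M_i$.

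Next, use $R = R_0[1/\varpi]$ together with $\sum t_i R = R$ to produce $N \geq 0$ and $c_1,\dots,c_n \in R_0$ with $\sum c_i t_i = \varpi^N$. The multinomial theorem exhibits $\varpi^{NnM} = (\sum c_i t_i)^{nM}$ as a homogeneous polynomial of degree $nM$ in the $t_i$ with $R_0$ coefficients. The core technical claim, proved by induction on $m \geq 0$, is that $\varpi^{NnM} r^m = P_m(t)$ for some $P_m \in R_0[T_1,\dots,T_n]$ homogeneous of degree $nM$; the base case is the multinomial identity above. For the inductive step, $\varpi^{NnM} r^{m+1} = P_m(t)\cdot r = \sum_{|\gamma|=nM} c^{(m)}_\gamma t^\gamma r$; by the pigeonhole principle each such $\gamma$ has an index $i(\gamma)$ with $\gamma_{i(\gamma)} \geq M$, so $t^\gamma r = (t^\gamma/t_{i(\gamma)}^M)\cdot(t_{i(\gamma)}^M r) = (t^\gamma/t_{i(\gamma)}^M)\cdot G_{i(\gamma)}(t)$, which is again a homogeneous polynomial of degree $nM$ in the $t_i$ with $R_0$ coefficients, and summing over $\gamma$ yields the required $P_{m+1}$.

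Finally, choose $f \geq 0$ with $\varpi^f t_j \in R_0$ for all $j$, which is possible because each $t_j$ lies in $R_0[1/\varpi]$. Every monomial $t^\beta$ of degree $nM$ then satisfies $\varpi^{fnM} t^\beta = \prod_j(\varpi^f t_j)^{\beta_j} \in R_0$, so $\varpi^{fnM} P_m(t) \in R_0$ and hence $\varpi^{(N+f)nM} r^m \in R_0$ uniformly in $m$. This gives the required uniform bound $r^m \in \varpi^{-(N+f)nM} R_0$, i.e., $r \in R^\circ$. The subtle point—and the main obstacle when writing the argument out carefully—is that one must keep the $t$-degree of $P_m$ fixed at $nM$ as $m$ varies: naively raising the identity $\varpi^{NnM} r = P_1(t)$ to the $m$-th power produces a polynomial of degree $nmM$ and only yields $r^m \in \varpi^{-cm} R_0$ for some constant $c$, which is insufficient for power-boundedness. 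The rescue is to absorb one factor of $r$ at each step using $t_{i(\gamma)}^M r = G_{i(\gamma)}(t)$: this strips $M$ from the chosen $t$-power but reinserts degree $M$ via $G_{i(\gamma)}$, so the total $t$-degree, and hence the required $\varpi$-scaling, remains uniform in $m$.
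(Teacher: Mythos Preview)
Your argument is correct and is essentially the same as the paper's: you obtain the identities $t_i^{M}r=G_i(t)$ with $G_i\in R_0[T_1,\dots,T_n]$ homogeneous of degree $M$, and then run an induction on $m$ that absorbs one factor of $r$ at a time while keeping the $t$-degree fixed (via pigeonhole), exactly as the paper does. The only cosmetic differences are that the paper works with the possibly distinct $d_i$ and degree $N=\sum_i d_i$ (using the pigeonhole ``some $e_i\geq d_i$'') rather than your uniform $M$ and degree $nM$, and that the paper applies the $\varpi$-scaling into $R_0$ inside the inductive statement (proving $\varpi^{NA}h(t)r^m\in R_0$ for all homogeneous $h$ of degree $N$) and the partition of unity at the end, whereas you reverse this order; neither change affects the substance.
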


\begin{proof}
Suppose $r \in \cap_{i=1}^n \phi_i^{-1}(\phi_i(R_0)[t_1/t_i, \dots, t_n/t_i])$.
For each $i$ there is a homogeneous polynomial $f_i \in R_0[T_1, \dots, T_n]$
such that $\phi_i(r) = t_i^{-\deg(f_i)} \phi_i(f_i(t_1, \dots, t_n))$.
Since $t_i^{\deg(f_i)} r - f_i(t_1, \dots, t_n) \in \ker(\phi_i)$, there exists
$c_i \geq 0$ such that $t_i^{c_i} (t_i^{\deg(f_i)} r - f_i(t_1, \dots, t_n)) = 0$.
So $t_i^{d_i} r = g_i(t_1, \dots, t_n)$ where
$g_i = T_i^{c_i} f_i \in R_0[T_1, \dots, T_n]$ is homogeneous of degree
$d_i=c_i+\deg(f_i)$.

Set $N=d_1+\dots+d_n$.
Take $A\geq0$ such that $\varpi^At_i\in R_0$ for all~$i$.
We will show, by induction on $m\geq0$,
that $\varpi^{NA} h(t_1, \dots, t_n) r^m \in R_0$
for every $h \in R_0[T_1, \dots, T_n]$ that is homogeneous of degree $N$ and
all~$m \geq 0$.
The case $m=0$ is clear because $\varpi^At_i\in R_0$ for all~$i$.
Induction step: $m>0$.
It is sufficient to consider the case where $h$ is a monomial, $h=T_1^{e_1} \dots T_n^{e_n}$.
Since $e_1+\dots+e_n=N=d_1+\dots+d_n$, there is at least one $i$ for which $e_i \geq d_i$.
Without loss of generality we can assume that $i=1$.
Now
$\varpi^{NA} t_1^{e_1} \dots t_n^{e_n} r^m = \varpi^{NA} t_1^{e_1-d_1} t_2^{e_2} \dots t_n^{e_n} g_1(t_1, \dots, t_n) r^{m-1}$
and by the induction hypothesis this is in~$R_0$.
This concludes the induction proof.

There exist $a_1, \dots, a_n \in R$ such that $a_1 t_1+\dots+a_n t_n=1$.
Take $B\geq0$ such that $\varpi^Ba_i\in R_0$ for all~$i$.
Applying the above result to $h = (\varpi^Ba_1 T_1+\dots+\varpi^Ba_n T_n)^N$
shows that $\varpi^{N(A+B)}r^m \in R_0$ for all~$m \geq 0$, and hence
$r\in R^\circ$.
\end{proof}

\begin{corollary}\label{uniformimpliesstrict}
Let $(R,R^+)$ be a uniform Tate affinoid ring,
and let $X=\Spa(R,R^+)$ be the associated affinoid pre-adic space. Let $t\in R$
and set $U=\{x\in X:|t(x)|\leq 1\}$ and $V=\{x\in X:|t(x)|\geq1\}$.
Then the sequence
$$0\to\OX(X)\to\OX(U)\oplus\OX(V)\to\OX(U\cap V)\to 0$$
is exact.
\end{corollary}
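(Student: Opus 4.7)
The plan is to reduce the corollary to condition~(iii) of Lemma~\ref{exact} and then invoke Lemma~\ref{alain}. Since Lemma~\ref{exact} shows that (i) and (ii) are equivalent, the surjectivity of the final arrow (and hence the rightmost $\to 0$) comes for free once exactness of $(*)$ is established. So it is enough to exhibit an integer $n\geq 0$ with $\varpi^n\bigl(A_0\cap\phi^{-1}(B_0)\bigr)\subseteq R_0$, where, as in the discussion preceding Lemma~\ref{exact}, $A_0=R_0[t]$ and $B_0=\phi(R_0)[1/t]$.

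My strategy is to first obtain the strong inclusion $A_0\cap\phi^{-1}(B_0)\subseteq R^\circ$ directly from Lemma~\ref{alain}, and then use uniformity to absorb the factor $\varpi^n$. For the first step I would apply Lemma~\ref{alain} to the two-element family $t_1=1$, $t_2=t$ in~$R$, which trivially satisfies $t_1R+t_2R=R$. Unwinding the notation, $\phi_1$ is the identity on~$R$ and $\phi_1(R_0)[t_1/t_1,t_2/t_1]=R_0[t]=A_0$; similarly $\phi_2=\phi$ and $\phi_2(R_0)[t_1/t_2,t_2/t_2]=\phi(R_0)[1/t]=B_0$. The conclusion of Lemma~\ref{alain} is then exactly $A_0\cap\phi^{-1}(B_0)\subseteq R^\circ$, which is the geometric statement that an element that is integral over $R_0$ on both rational subsets is globally power-bounded.

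For the second step, uniformity of $(R,R^+)$ gives $R^\circ\subseteq\varpi^{-n}R_0$ for some $n\geq 0$, so multiplying through,
$$\varpi^n\bigl(A_0\cap\phi^{-1}(B_0)\bigr)\subseteq\varpi^n R^\circ\subseteq R_0.$$
This is condition~(iii) of Lemma~\ref{exact}, so by that lemma the sequence $(*)$ is exact and the final arrow is surjective, which is precisely the statement of the corollary.

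Because both the algebraic input (Lemma~\ref{alain}) and the homological input (Lemma~\ref{exact}) are already available, there is no serious obstacle; the only conceptual step is recognizing that the partition-of-unity $\{1,t\}$ is the right family to feed into Lemma~\ref{alain} so that the two localized subrings appearing in its conclusion match $A_0$ and $\phi^{-1}(B_0)$ on the nose.
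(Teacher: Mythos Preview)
Your proof is correct and follows essentially the same approach as the paper: apply Lemma~\ref{alain} with $t_1=1$, $t_2=t$ to get $A_0\cap\phi^{-1}(B_0)\subseteq R^\circ$, then use uniformity to verify condition~(iii) of Lemma~\ref{exact}. The paper's proof is more terse but the logic is identical.
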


\begin{proof}
The conclusion of the corollary is condition~(ii)
of Lemma~\ref{exact}, so it suffices to verify condition~(iii) of
that lemma.
Applying Lemma~\ref{alain} with $t_1=1$ and $t_2=t$ ($\phi_1 $ is the
identity, $\phi_2=\phi$) we deduce $A_0\cap\phi^{-1}(B_0)\subseteq R^\circ$,
and we can conclude because $R^\circ\subseteq\varpi^{-n}R_0$ for some
$n\geq0$ by uniformity.
\end{proof}

\begin{corollary}\label{locallyzeroimpliestopnilp}
If $X$ is a Tate affinoid pre-adic space, $f\in\OX(X)$,
and $X$ has a cover by opens $U_i$ such that $f|U_i=0$ for all $i$,
then $f$ is topologically nilpotent.
\end{corollary}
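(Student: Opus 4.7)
The plan is to refine $\{U_i\}$ to a rational cover, approximate $f$ by some $r\in R$, use Lemma~\ref{alain} to conclude that $\varpi^{-1}r\in R^\circ$, and then deduce topological nilpotence by exhibiting $f$ as $\varpi$ times a power-bounded element.

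By Lemma~\ref{covers}(i) I may assume the cover is the rational cover cut out by elements $t_1,\ldots,t_n\in R$ with $\sum_i t_iR=R$; the restrictions of $f$ to each rational open are still zero by compatibility of restriction with refinement. The presheaf on the $i$th rational open is the completion of the Tate ring $R[1/t_i]$ topologized by the subring $S_i:=\phi_i(R_0)[t_1/t_i,\ldots,t_n/t_i]$ and $\varpi$. Choose $r\in R$ with $f-i(r)\in\varpi\widehat{R_0}$, possible by Lemma~\ref{basic}(i) and the density of $i(R)$ in $\Rhat$. Since $f$ vanishes on each rational open and the restriction map $\Rhat\to\widehat{R[1/t_i]}$ sends $\widehat{R_0}$ into $\widehat{S_i}$, the image of $i(r)$ in $\widehat{R[1/t_i]}$ lies in $\varpi\widehat{S_i}=\widehat{\varpi S_i}$ (the last equality being Lemma~\ref{basic}(i) applied in $R[1/t_i]$). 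Applying Lemma~\ref{basic}(ii) in $R[1/t_i]$ yields $\phi_i(r)\in\varpi S_i$; equivalently, $s:=\varpi^{-1}r\in R$ satisfies $\phi_i(s)\in S_i$ for each $i$ (using $\varpi\in R^\times$), and Lemma~\ref{alain} then forces $s\in R^\circ$.

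Writing $f-i(r)=\varpi h$ with $h\in\widehat{R_0}$ gives $f=\varpi g$, where $g:=i(s)+h\in\widehat{R^\circ}=(\Rhat)^\circ$ by Lemma~\ref{basic}(iii). Power-boundedness of $g$ supplies $K\geq0$ with $g^m\in\varpi^{-K}\widehat{R_0}$ for all $m\geq0$, so $f^m=\varpi^m g^m\in\varpi^{m-K}\widehat{R_0}$, which tends to $0$ by Lemma~\ref{basic}(i); hence $f$ is topologically nilpotent. The main technical step is the passage in the previous paragraph from the hypothesis $f|_{U_i}=0$ to the algebraic statement $\phi_i(\varpi^{-1}r)\in S_i$ that feeds into Lemma~\ref{alain}; once Lemma~\ref{alain} produces the power-bounded cofactor, the rest is a direct computation.
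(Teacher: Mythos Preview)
Your proof is correct and follows the same strategy as the paper: refine to a rational cover, use Lemma~\ref{alain} to show that $\varpi^{-1}f$ (or a suitable approximation) is power-bounded, and conclude that $f=\varpi\cdot(\text{power-bounded})$ is topologically nilpotent. The paper's argument is terser---it applies Lemma~\ref{alain} directly to $\varpi^{-1}f$ without explicitly negotiating the passage between $R$ and $\Rhat$---whereas you carefully approximate $f$ by an element $r\in R$ and use Lemma~\ref{basic}(ii) to pull the local vanishing back to an algebraic statement in $R[1/t_i]$ before invoking Lemma~\ref{alain}.
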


\begin{proof}
By Lemma~\ref{covers}(i) we may assume the cover is rational.
By Lemma~\ref{alain} any locally zero element is power-bounded.
Applying this to $\varpi^{-1}f$ we see that $\varpi^{-1}f$ is power-bounded
and hence $f$ is topologically nilpotent.
\end{proof}

\begin{remark}
We will need Lemma~\ref{alain} and Corollary~\ref{uniformimpliesstrict} later,
but
Peter Scholze points out to us that Corollary~\ref{locallyzeroimpliestopnilp}
also follows easily from Theorem~1.3.1 of~\cite{berkovich:book}.
\end{remark}

We now give a new criterion for the presheaf $\OX$ on $\Spa(R,R^+)$
to be a sheaf. Let us say that a Tate affinoid ring $(R,R^+)$
(or the associated pre-adic space $\Spa(R,R^+)$) is
\emph{stably uniform}
if every rational subset $U\subseteq\Spa(R,R^+)$ has the property
that $\OX(U)$ is uniform.
We remark that a Tate ring $R$ is uniform
iff its completion is, by Lemma~\ref{basic}(iii).

\begin{theorem}\label{stablyuniformimpliessheaf}
Let $(R,R^+)$ be a stably uniform Tate affinoid ring.
Then $X:=\Spa(R,R^+)$ is an adic space, in other words, the presheaf
$\OX$ on $X$ is a sheaf of complete topological rings.
\end{theorem}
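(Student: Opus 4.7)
The plan is to reduce the sheaf property to the two-element Laurent cover case already handled by Corollary~\ref{uniformimpliesstrict}, exploiting stable uniformity to iterate.

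First, I would reduce the sheaf axiom to checking exactness of the \v{C}ech sequence for finite rational covers. Since $X=\Spa(R,R^+)$ is quasi-compact and, by Lemma~\ref{covers}(i), every open cover refines to a rational one, it suffices to verify the sheaf axiom for arbitrary finite rational covers of $X$ and all its rational subsets. Because any rational subset $U$ of $X$ is itself (the pre-adic space attached to) a stably uniform Tate affinoid ring --- rational subsets of rational subsets are rational --- once I have the sheaf axiom for $X$ I automatically get it for every $U$, so I may focus on covers of $X$ itself.

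Second, I would use the classical reduction from arbitrary finite rational covers to \emph{Laurent covers}, i.e.\ covers of the form
$$X=\bigcup_{\varepsilon\in\{0,1\}^n}X_\varepsilon,\qquad X_\varepsilon=\{x\in X:|t_i(x)|\leq 1\text{ for }\varepsilon_i=0,\ |t_i(x)|\geq 1\text{ for }\varepsilon_i=1\},$$
for some $t_1,\ldots,t_n\in R$. This refinement argument is standard (see Huber or~\cite{bgr} \S8.2.2): given a rational cover $\{U(T_i;s_i)\}$, one produces a Laurent cover that refines it using the ratios $t_{ij}/s_i$, and a diagram chase shows that the \v{C}ech sequence for the original cover is exact provided those for the Laurent cover and for the cover of each overlap $U(T_i;s_i)$ by its induced Laurent cover are exact. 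Because the rings attached to all these rational subsets are uniform by stable uniformity, this step reduces the theorem to verifying the sheaf axiom for Laurent covers of an arbitrary rational subset.

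Third, I would prove the Laurent case by induction on $n$. The base case $n=1$ is exactly Corollary~\ref{uniformimpliesstrict} applied to the rational subset in question: its structure ring is uniform (stable uniformity, together with Lemma~\ref{basic}(iii) which ensures uniformity passes to the completion $\OX(X)$), so the two-term sequence is exact. For the inductive step, given generators $t_1,\ldots,t_n$, I set $W_+=\{|t_n|\leq 1\}$ and $W_-=\{|t_n|\geq 1\}$. These are rational, hence stably uniform, and the given Laurent cover restricts to an $(n{-}1)$-generator Laurent cover of each $W_\pm$. The induction hypothesis gives the sheaf axiom for $\OX$ on each of these covers of $W_\pm$, and the $n=1$ case applied to the cover $X=W_+\cup W_-$ then glues the data: an elementary diagram chase (writing the \v{C}ech complex of the $n$-fold Laurent cover as the total complex of the two $(n{-}1)$-fold \v{C}ech complexes on $W_\pm$ joined by the $n=1$ sequence) concludes exactness.

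The main obstacle is the refinement step from arbitrary finite rational covers to Laurent covers: one must check that the hypothesis of stable uniformity really does let one invoke the two-term exact sequence at every intermediate stage, and that the diagram chase tying the \v{C}ech complex of a cover to that of its Laurent refinement works in the topological (not just algebraic) setting, so that completeness is preserved. Once that standard machinery is in place, the inductive assembly and the appeal to Corollary~\ref{uniformimpliesstrict} are routine.
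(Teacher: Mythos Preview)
Your approach is essentially that of the paper: reduce to Laurent covers via Lemma~\ref{covers} and the BGR \S8.2.2 machinery, then induct on the number of generators with Corollary~\ref{uniformimpliesstrict} as the base case. The paper is more explicit than you are about the obstacle you flag at the end: the upgrade from a sheaf of abelian groups to a sheaf of \emph{complete topological rings} is handled as a separate step, by invoking Lemma~\ref{alain} together with uniformity to show that the map $\OX(U)\to\prod_i\OX(U_i)$ is strict for any rational cover, and then appealing to Huber's criterion in \S2 of~\cite{huber:generalization}.
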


Note that there are no finiteness hypotheses on~$R$ whatsoever. 
Before we embark upon the proof, let us remark that its deduction
from Corollary~\ref{uniformimpliesstrict} is, to a large
extent, an application of standard machinery, although unfortunately
we have found no single reference in the literature that fully covers
our requirements. The following sources were of great use to us:
\S2 of Huber's paper~\cite{huber:generalization} (proving an
analogous result for adic spaces under some Noetherian hypotheses),
Chapter~8 of~\cite{bgr} (proving Tate's acyclicity theorem for affinoid
$k$-algebras topologically of finite type) and finally \S8.2 of~\cite{wedhorn}.
As preparation we now consider two special types of
covers of affinoid pre-adic spaces and some relationships between them.

Say $(R,R^+)$ is a Tate affinoid ring,
and $t_1,t_2,\ldots,t_n\in R$ are elements of~$R$
such that the ideal they generate is all of~$R$.
Set $X=\Spa(R,R^+)$, and for $1\leq i\leq n$
define $U_i:=\{x\in X\,:\,|t_j(x)|\leq |t_i(x)|\,\mbox{for all $1\leq j\leq n$}\}$.
Then each $U_i$ is a rational subset of~$X$ and, because the $t_i$'s generate 1,
the union of the $U_i$ is~$X$. Such a cover is called a \emph{rational cover}.
If furthermore each $t_i\in R^\times$, the cover is called
a \emph{rational cover generated by units}.

Now say $(R,R^+)$ is a Tate affinoid ring, and
$t_1,t_2,\ldots,t_n\in R$. Set $X=\Spa(R,R^+)$, and for each subset
$I$ of $\{1,2,\ldots,n\}$ define
$U_I=\{x\in X\,:\,|t_i(x)|\leq1\mbox{\ for\ }i\in I,\,|t_i(x)|\geq1\mbox{\ for\ }i\not\in I\}$.
Then each $U_I$ is a rational subset of~$X$ and the union of the
$2^n$ sets $U_I$ is~$X$.  Such a cover is called a \emph{Laurent
cover}.

\begin{lemma}[Huber]\label{covers}
Let $X$ be a Tate affinoid pre-adic space.
\begin{enumerate}[(i)]
\item For every open cover $\mathcal U$ of $X$, there exists a rational
cover $\mathcal V$ of $X$ which is a refinement of~$\mathcal U$.
\item For every rational cover $\mathcal U$ of $X$, there exists a
Laurent cover $\mathcal V$ of $X$ such that for every $V\in\mathcal V$,
the cover $\{U\cap V\,:\,U\in\mathcal U\}$ of $V$ is a rational cover
generated by units.
\item For every rational cover $\mathcal U$ of $X$ generated by units,
there exists a Laurent cover $\mathcal V$ of $X$ which is a refinement
of~$\mathcal U$.
\end{enumerate}
\end{lemma}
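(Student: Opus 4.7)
The plan is to prove the three refinement statements in the order (i), (iii), (ii), each by constructing an explicit refinement. Throughout I would rely on two standing facts about $X = \Spa(R, R^+)$: it is quasi-compact, and its rational subsets form a basis of its topology.

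For (i), I would apply quasi-compactness to reduce the given open cover to a finite subcover, use the basis property to refine each open set to a union of rational subsets, and apply quasi-compactness again to extract a finite subcover by rational subsets. The only remaining task is to repackage a finite cover by rational subsets (each of the shape $\{|f_{k,j}| \leq |g_k| \text{ for all } j\}$) as a rational cover in the structured sense of the definition, indexed by a tuple $t_1, \ldots, t_n \in R$ generating the unit ideal with $U_i = \{|t_j| \leq |t_i| \text{ for all } j\}$. This is a routine common-denominator manipulation combining the defining data of the individual rational pieces into a single tuple whose induced structured rational cover refines the original.

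For (iii), given a rational cover generated by units $t_1, \ldots, t_n \in R^\times$, I would let $\mathcal V$ be the Laurent cover of $X$ generated by the finite collection of ratios $\{t_j/t_i : i \neq j\} \subseteq R$. On any Laurent piece $V$, the choice of $\leq 1$ or $\geq 1$ for each ratio $|t_j(x)/t_i(x)|$ is fixed, so the values $|t_1(x)|, \ldots, |t_n(x)|$ are compared by the same preorder at every $x \in V$. A maximal index $i_0$ for this preorder satisfies $|t_j(x)| \leq |t_{i_0}(x)|$ for all $j$ and all $x \in V$, yielding $V \subseteq U_{i_0}$, so $\mathcal V$ refines $\mathcal U$.

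For (ii), which I expect to be the main obstacle: given the rational cover $\mathcal U$ by $t_1, \ldots, t_n$ generating the unit ideal, I would choose $a_i \in R$ with $\sum_i a_i t_i = 1$ and form the Laurent cover $\mathcal V$ generated by $a_1 t_1, \ldots, a_n t_n$. On a Laurent piece $V_I$ (with $|a_i t_i| \leq 1$ for $i \in I$ and $|a_i t_i| \geq 1$ for $i \notin I$), every $t_i$ with $i \notin I$ is a unit in $\OX(V_I)$, and the ultrametric inequality applied to $\sum a_i t_i = 1$ forces $\max_i |a_i t_i(x)| \geq 1$ at every point of $X$. The technical heart of the argument is to show that on each such $V_I$ the restricted cover $\{U_k \cap V_I\}_k$ coincides, as a set of subsets, with a rational cover of $V_I$ generated by units of $\OX(V_I)$ built from the $t_i$ with $i \notin I$. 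The main subtlety is the pieces indexed by $k \in I$, where $t_k$ need not be a unit on $V_I$, and the degenerate piece $V_{\{1,\ldots,n\}}$, where no $t_i$ is \emph{a priori} a unit; I would handle both by exploiting the ultrametric bound above, which confines $V_{\{1,\ldots,n\}}$ to the union of the other Laurent pieces (so it is removable up to a further Laurent refinement) and lets one replace the non-unit $t_k$'s by suitable units built from $\{t_i\}_{i \notin I}$ and the $a_j$'s, using the identity $\sum_{i \notin I} a_i t_i = 1 - \sum_{i \in I} a_i t_i$ inside $\OX(V_I)$.
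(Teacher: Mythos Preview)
Your treatments of (i) and (iii) are fine and match what the paper does (it simply cites Huber's Lemma~2.6 for (i) and the combinatorial argument of \cite{bgr} Lemma~8.2.2/4 for (iii)).

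Part (ii) is where your proposal diverges from the paper and where there is a real gap. The paper does \emph{not} take the Laurent cover generated by $a_1t_1,\ldots,a_nt_n$. Instead it first uses the $a_i$ only to extract a uniform bound: choosing $B\geq 0$ with $\varpi^B a_i\in R^+$ for all $i$, one gets $|\varpi^{B+1}(x)|<\max_i|t_i(x)|$ for every $x$. The Laurent cover is then the one generated by $ct_1,\ldots,ct_n$ with the single unit $c=\varpi^{-(B+1)}$. Because $c$ is a unit, $|ct_i|\leq 1$ on a piece $V_I$ genuinely bounds $|t_i|\leq |\varpi^{B+1}|$, and the strict inequality above forces $V_{\{1,\ldots,n\}}=\emptyset$ and $U_k\cap V_I=\emptyset$ for every $k\in I$; the remaining pieces $U_k\cap V_I$ for $k\notin I$ are then exactly the rational cover of $V_I$ by the units $\{t_i:i\notin I\}$.

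Your choice of generators $a_it_i$ loses precisely this control: knowing $|a_kt_k(x)|\leq 1$ tells you nothing about $|t_k(x)|$, because $a_k$ need not be a unit. So for $k\in I$ the set $U_k\cap V_I$ need not be empty, nor need it coincide with any piece of the rational cover by $\{t_i:i\notin I\}$. (Already with $R=k\langle T\rangle$, $t_1=T$, $t_2=1-T$, $a_1=a_2=1$, on $V_{\{1\}}=\{|1-T|\geq 1\}$ the element $t_1=T$ is not a unit, yet $U_1\cap V_{\{1\}}$ is nonempty and not equal to $U_2\cap V_{\{1\}}=V_{\{1\}}$.) Your proposed fixes do not close this: the observation that $V_{\{1,\ldots,n\}}$ lies in the union of the other pieces does not let you drop it from a Laurent cover, and the identity $\sum_{i\notin I}a_it_i=1-\sum_{i\in I}a_it_i$ gives no unit to substitute for the missing $t_k$'s. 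The repair is exactly the paper's move: replace the individual scalings $a_i$ by a single topologically-chosen unit $c$.
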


\begin{proof}
(i) See~\cite{huber:generalization} Lemma 2.6.

(ii) If $\mathcal U$ is generated by $t_1,t_2,\ldots,t_n\in R$ 
then by assumption there are $a_i\in R$ such that $\sum_i a_i t_i=1$.
Because $R^+$ is open there exists $B\geq0$ such that $\varpi^Ba_i\in R^+$ for
all~$i$. For all $x\in X$ we have by definition that $|r(x)|\leq 1$
for all $r\in R^+$,
so from $\varpi^B=\sum_i(\varpi^Ba_i)t_i$ it follows that
$|\varpi^B(x)|\leq\max_i|t_i(x)|$, so (since $|\varpi(x)|<1$ by continuity of~$x$)
$|\varpi^{B+1}(x)|<\max_i|t_i(x)|$.
One checks easily, see for example the proof of~\cite{bgr} Lemma~8.2.2/3,
that the Laurent cover generated by the $ct_i$ with $c=\varpi^{-(B+1)}$
has the desired property.

(iii) This can be shown by the purely combinatorial argument in the
proof of~\cite{bgr} Lemma 8.2.2/4.
\end{proof}

\begin{proof}[Proof of Theorem~\ref{stablyuniformimpliessheaf}]
Note
that a rational subset of a stably uniform
affinoid pre-adic space is again stably uniform.
First consider $\OX$ as a presheaf of abelian groups on~$X$.
We claim that any Laurent cover of any rational subset of $X$
is $\OX$-acyclic. We prove this by induction on~$n$, the number
of functions $t_i$ used to define the Laurent cover.
For $n=1$ the claim is just Corollary~\ref{uniformimpliesstrict}
and the inductive step is proved following \cite{bgr} Corollary~8.1.4/4. 

The proof of \cite{bgr} Proposition~8.2.2/5, using Lemma~\ref{covers}(i)--(iii)
in lieu of \cite{bgr} Lemmas~8.2.2/2--4, now shows that any cover by
rational subsets of any rational subset of $X$ is $\OX$-acyclic.
It follows that $\OX$ is a sheaf of abelian groups on
the site whose objects are rational subsets of $\Spa(R,R^+)$
and whose covers are covers of rational subsets by rational
subsets. 

Since $\OX$ is a presheaf of rings and a sheaf of abelian groups, it
is also a sheaf of rings on this site. We claim that it is even a sheaf
of complete topological rings on this site.  
For this it suffices,
by the first paragraph of \S2 of~\cite{huber:generalization}, to check
that if $U=\cup_i U_i$ is a cover of a rational subset $U$ by rational
subsets, then the induced map $\OX(U)\to\prod_i\OX(U_i)$ is strict.
By Lemma~\ref{covers}(i) there is a rational cover $U=\cup_j V_j$
that refines $U=\cup_i U_i$. Lemma~\ref{alain} and the uniformity
hypothesis imply that the induced
map $\OX(U)\to\prod_j\OX(V_j)$ is strict and since this map
factors through $\OX(U)\to\prod_i\OX(U_i)$ that map must be strict
too.

We have established that $\OX$ is a sheaf of complete topological rings
on the basis of rational subsets of~$X$ and by~\cite{ega1} Chapter~0 (3.2.2)
we deduce that $\OX$ is a sheaf of complete topological rings on~$X$.
\end{proof}

As a toy example of an application, we get a new proof
of Theorem~6.3(iii) of~\cite{scholze:perfectoid}, that
avoids the arguments of 6.10--6.14 of~{\it loc.\ cit.}

\begin{corollary}\label{perfectoid}
If~$k$ is a perfectoid field then the affinoid pre-adic space associated to
a perfectoid $k$-algebra is an adic space.
\end{corollary}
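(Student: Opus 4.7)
The plan is to verify the hypothesis of Theorem~\ref{stablyuniformimpliessheaf}: I will show that a perfectoid $k$-algebra $(R,R^+)$ is stably uniform, after which the conclusion is immediate.

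The first ingredient is that $R$ itself is uniform, which is built into the definition of a perfectoid $k$-algebra (Definition~5.1 of~\cite{scholze:perfectoid}): $R^\circ$ is bounded by hypothesis. The second ingredient, which carries the real content, is uniformity of $\OX(U)$ for every rational subset $U\subseteq X:=\Spa(R,R^+)$. I would obtain this by showing that $\OX(U)$ is itself a perfectoid $k$-algebra; uniformity of $\OX(U)$ then reduces to the first ingredient applied to $\OX(U)$. The assertion that completed rational localizations of perfectoid $k$-algebras are again perfectoid is essentially Theorem~6.3(i) of~\cite{scholze:perfectoid}: in characteristic~$p$ it follows by directly checking that the Frobenius remains bijective on the completion (described explicitly via $B_0=\phi(R_0)[1/t]$ and $C_0=\phi(R_0)[t,1/t]$ in Section~2), and in characteristic zero one transfers the statement via Scholze's tilting correspondence.

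The main obstacle is ensuring that this second ingredient can be invoked without circularity --- i.e., that the assertion ``rational localizations of perfectoid $k$-algebras are perfectoid'' can be isolated from the sheafiness statement in Scholze's Theorem~6.3. This is in fact the case: the arguments of 6.10--6.14 of~\cite{scholze:perfectoid} that we wish to avoid are those invoking almost mathematics to establish sheafiness and almost-acyclicity of $\OX^+$, not those establishing perfectoid-ness of individual rational localizations. Once this logical separation is granted, the corollary is a one-line application of Theorem~\ref{stablyuniformimpliessheaf}.
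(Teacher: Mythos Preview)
Your proposal is correct and follows essentially the same route as the paper: verify stable uniformity by noting that perfectoid $k$-algebras are uniform by definition and that rational localizations of perfectoid algebras are again perfectoid (hence uniform), then invoke Theorem~\ref{stablyuniformimpliessheaf}. The paper cites Corollary~6.8 of~\cite{scholze:perfectoid} rather than Theorem~6.3(i) for the localization step, and your explicit discussion of the non-circularity (that the perfectoid-ness of rational localizations is established in~\cite{scholze:perfectoid} independently of the acyclicity arguments in~6.10--6.14) makes explicit a point the paper leaves implicit in its remark about ``avoiding the arguments of~6.10--6.14.''
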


\begin{proof}
Perfectoid affinoid $k$-algebras are uniform (by definition) and hence
stably uniform (by Corollary~6.8 of~\cite{scholze:perfectoid}),
so the theorem directly implies that the affinoid pre-adic space associated
to a perfectoid affinoid $k$-algebra is an adic space.
\end{proof}

\begin{remarkn} Brian Conrad notes that Scholze has systematically removed
all need for a ground field in his perfectoid theory, in his 2014 UC Berkeley
course, and in particular apparently Corollary~6.8 is valid more generally
with a uniform and Tate condition. Hence our arguments will show that the
affinoid pre-adic space associated to a perfectoid ring is an adic space.
A similar comment applies to the following corollary.
\end{remarkn}

We also deduce that under the stably uniform assumption, in characteristic~$p$
we can check that a ring is perfectoid locally.

\begin{corollary}\label{216incharp} If~$k$ is a perfectoid field of characteristic~$p>0$, and if~$A$ is a stably uniform complete Tate $k$-algebra
such that $\Spa(A,A^+)$ has a rational cover by affinoids of the form
$\Spa(R_i,R_i^+)$ with the $R_i$ perfectoid $k$-algebras, then $A$
is perfectoid.
\end{corollary}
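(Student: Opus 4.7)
The plan is to invoke Theorem~\ref{stablyuniformimpliessheaf} to promote $\OX$ on $X:=\Spa(A,A^+)$ to a genuine sheaf of complete topological rings, and then to use it to glue local $p$-th roots into a global one. First recall that in characteristic~$p$ a perfectoid $k$-algebra is the same thing as a complete uniform Tate $k$-algebra that is perfect, i.e.\ on which Frobenius $x\mapsto x^p$ is bijective (cf.\ \cite{scholze:perfectoid}). By hypothesis $A$ is complete, Tate and stably uniform, hence in particular uniform and therefore reduced, so Frobenius on $A$ is already injective and the only thing left to verify is its surjectivity.

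To that end I would fix $a\in A=\OX(X)$ (the last equality because $A$ is complete) and work with the given rational cover $U_i=\Spa(R_i,R_i^+)$. Since each $R_i=\OX(U_i)$ is perfectoid, Frobenius is surjective on it, producing elements $b_i\in R_i$ with $b_i^p=a|_{U_i}$. The next step is to check that the $b_i$ agree on overlaps. Each intersection $U_i\cap U_j$ is a rational subset of~$X$, so by stable uniformity $\OX(U_i\cap U_j)$ is uniform and therefore reduced. Since we are in characteristic $p$, the element $b_i-b_j\in\OX(U_i\cap U_j)$ satisfies $(b_i-b_j)^p=b_i^p-b_j^p=0$, and reducedness forces $b_i=b_j$ on the overlap. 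Applying the sheaf axiom, which holds by Theorem~\ref{stablyuniformimpliessheaf} (and after refining to a rational cover via Lemma~\ref{covers}(i) if necessary), the $b_i$ glue to a unique $b\in\OX(X)=A$ restricting to~$b_i$ on each~$U_i$, and the identity $b^p=a$ then holds globally because it holds on each member of the cover and $\OX(X)\hookrightarrow\prod_i\OX(U_i)$ is injective.

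I do not anticipate a serious obstacle: once one unwinds what perfectoid means in characteristic~$p$, the argument is essentially a one-line sheaf-gluing. The only point at which the full strength of \emph{stable} uniformity (as opposed to mere uniformity of~$A$) is really used is in the reducedness of the overlap rings $\OX(U_i\cap U_j)$, which is precisely what guarantees uniqueness of the local $p$-th roots on overlaps and thereby makes the gluing possible.
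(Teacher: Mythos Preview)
Your proof is correct and follows the same overall strategy as the paper's: reduce to surjectivity of Frobenius, take local $p$-th roots, check they agree on overlaps, and glue via Theorem~\ref{stablyuniformimpliessheaf}. There is one notable difference in how you verify agreement on overlaps. The paper invokes Theorem~6.3 of~\cite{scholze:perfectoid} to argue that $U_i\cap U_j$, being a rational subset of the perfectoid space $U_i$, is itself perfectoid and hence reduced (giving uniqueness of $p$-th roots there). You instead observe that $U_i\cap U_j$ is a rational subset of $X$ and use stable uniformity of~$A$ directly to conclude $\OX(U_i\cap U_j)$ is uniform, hence reduced. Your route is arguably more self-contained, since it avoids appealing to the nontrivial fact that rational localisations of perfectoid algebras remain perfectoid.

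One inaccuracy in your closing commentary: reducedness of the overlap rings is \emph{not} the only place stable uniformity (as opposed to mere uniformity of~$A$) is used. Theorem~\ref{stablyuniformimpliessheaf} itself requires the full stable uniformity hypothesis; without it you would not know $\OX$ is a sheaf at all, and the gluing step would be unavailable. This does not affect the correctness of your argument, only the accuracy of your diagnosis of where the hypothesis enters.
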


\begin{proof} 
By Proposition~5.9 of~\cite{scholze:perfectoid}, it suffices to
show that the $p$th power map $A\to A$ is surjective. So say $a\in A$.
Let $a_i$ denote the restriction of $a$ to $R_i$; then because $R_i$
is perfectoid (and hence reduced) we know $a_i=(b_i)^p$ for a unique
$b_i\in R_i$. A rational subspace of an affinoid perfectoid space
is again perfectoid, by Theorem~6.3 of~\cite{scholze:perfectoid},
and hence the $b_i$ agree on overlaps; Theorem~\ref{stablyuniformimpliessheaf}
implies that the $b_i$ glue together to give an element $b\in A$.
Now $b^p-a$ is locally zero and hence zero (again by Theorem~\ref{stablyuniformimpliessheaf}), and hence $b^p=a$.
\end{proof}

\section{Counterexamples}\label{counterexamples}

Throughout this section, $k$ is a field complete with
respect to a non-trivial non-archimedean norm $|.|:k\to\R_{\geq0}$, $\Ok$ is
its integer ring, $\varpi$ is an element of $k^\times$ with $0<|\varpi|<1$,
$R$ will be a $k$-algebra and $R_0$ will be an $\Ok$-subalgebra of $R$ such
that~$kR_0=R$. We call such an~$R$ (equipped with the topology coming
from $R_0$ and $\varpi$) a Tate $k$-algebra; such~$R$ are Tate rings.

In this section we give various examples of affinoid $k$-algebras
for which the structure presheaf is not a sheaf of complete
topological rings (and is not even
a sheaf of abelian groups).
Let us say that an affinoid $k$-algebra $(R,R^+)$ is \emph{sheafy}
if $X:=\Spa(R,R^+)$ is an adic space
(that is, if $\OX$ is a sheaf of complete topological rings).
We remark here that as this paper
was being written, a preprint of Tomoki Mihara appeared
on the ArXiv~\cite{mihara} with another example; Mihara's work
was independent of ours.

The following lemma will be helpful for us when attempting to locate
the power-bounded elements in polynomial rings (which are naturally
graded).

\begin{lemma}\label{grading}
Let $R$ be a
Tate $k$-algebra with topology
defined by an $\Ok$-subalgebra $R_0$. Say we are given
a torsion-free (additive) abelian group~$G$ and a $G$-grading of~$R$, that is,
a decomposition $R=\bigoplus_{g\in G} R^{(g)}$
where the $R^{(g)}$ are $k$-subspaces
of $R$ satisfying $R^{(g)}R^{(h)}\subseteq R^{(g+h)}$. Suppose that $R_0$
is also graded by this grading, that is, $R_0=\bigoplus_{g\in G}(R_0)^{(g)}$,
with $(R_0)^{(g)}=R_0\cap R^{(g)}$.
Then $R^\circ$ is also graded by this grading.
\end{lemma}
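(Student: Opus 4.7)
The plan is to show, by induction on the size of the support $S=\{g\in G:r^{(g)}\neq 0\}$, that for every $r=\sum_{g\in S}r^{(g)}$ in $R^\circ$ each homogeneous piece $r^{(g)}$ lies in $R^\circ$. The base case $|S|\le 1$ is trivial, and the induction step will peel off one extremal component at a time, using that $R^\circ$ is a ring to conclude that the remainder stays in $R^\circ$.

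First I would use torsion-freeness of $G$: the subgroup $\langle S\rangle\subseteq G$ generated by the finite set $S$ is finitely generated and torsion-free, hence free abelian of finite rank, so it admits a translation-invariant total order $\le$ (for instance, lexicographic order on any chosen $\Z$-basis). Let $g_{\max}:=\max S$ with respect to this order. The combinatorial heart of the argument is that in the expansion $r^m=\sum r^{(g_{i_1})}\cdots r^{(g_{i_m})}$ the piece of degree $mg_{\max}$ is exactly $(r^{(g_{\max})})^m$: if $g_{i_1}+\cdots+g_{i_m}=mg_{\max}$ with every $g_{i_j}\le g_{\max}$, then the nonnegative quantities $g_{\max}-g_{i_j}$ sum to zero, forcing $g_{i_j}=g_{\max}$ for every $j$.

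Choosing $n\ge 0$ with $\varpi^n r^m\in R_0$ for every $m\ge 0$ (possible since $r\in R^\circ$), I would then use the hypothesis that $R_0=\bigoplus_g(R_0)^{(g)}$ with $(R_0)^{(g)}=R_0\cap R^{(g)}$: extracting the $mg_{\max}$-graded component of $\varpi^n r^m\in R_0$ lands in $(R_0)^{(mg_{\max})}\subseteq R_0$, so $\varpi^n(r^{(g_{\max})})^m\in R_0$ for every $m\ge 0$. This gives $r^{(g_{\max})}\in R^\circ$, and then $r-r^{(g_{\max})}\in R^\circ$ has one fewer nonzero graded piece, closing the induction.

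The only real obstacle is the first step: torsion-freeness of $G$ is essential for producing an extremal degree in $S$ whose contribution to $r^m$ cannot be reproduced by interactions among strictly smaller degrees. Once such a $g_{\max}$ is in hand, everything else is purely formal from the assumption that $R_0$ is a graded subring.
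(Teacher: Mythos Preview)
Your proof is correct and follows essentially the same strategy as the paper: induct on the size of the support, use torsion-freeness to totally order the subgroup generated by the support, peel off an extremal homogeneous component by reading off the extremal graded piece of $r^m$, and conclude that the remainder is still power-bounded. The only differences are cosmetic---you take the maximal element while the paper takes the minimal one, and you order $\langle S\rangle$ lexicographically on a $\Z$-basis while the paper embeds it into $\R$---and neither affects the argument.
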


\begin{proof}
Say $r\in R^\circ$. Then $r=\sum_{i\in I}r_i$, with $I\subseteq G$
a finite subset and $r_i\in R^{(i)}$. It suffices to check that $r_i\in R^\circ$
for all $i$. We do this by induction on the size of $I$. If $|I|\leq1$
the result is clear. For $|I|>1$ we let $H$ be the subgroup of~$G$
generated by $I$ and observe that $H$ is finitely-generated and torsion-free,
and hence a free abelian group, so there is an injection $H\to\R$,
giving us an ordering on~$I$. Say $i_0$ is the smallest element of~$I$
with respect to this embedding. Write $r=r_0+r_1$ with $r_0=r_{i_0}$.
Because $r\in R^\circ$ there is some $N$ such that $r^n\in\varpi^{-N}R_0$
for all $n\geq0$, and hence $r_0^n+r_2\in\varpi^{-N}R_0$, where
$r_0^n\in R^{(ni_0)}$ and $r_2$ is a sum of elements in $R^{(j)}$ for $j\in H$,
$j>ni_0$. In particular $r_0^n$ must be in $\varpi^{-N}(R_0)^{(ni_0)}$
and in particular $r_0^n\in\varpi^{-N}R_0$, hence $r_0\in R^\circ$,
and hence $r_1\in R^\circ$ and we can apply the inductive hypothesis to $r_1$,
finishing the argument.
\end{proof}

\subsection{A finitely-generated non-sheafy $k$-algebra}

Even if $R$ is a Tate $k$-algebra which
is finitely-generated as an abstract $k$-algebra,
the subalgebra $R_0$ defining the topology might be sufficiently
nasty to ensure that $(R,R^+)$ is not sheafy. This is not
surprising -- indeed Rost's example of a non-sheafy ring (which
is not a $k$-algebra) given at the end of \S1 of~\cite{huber:generalization}
is finitely-generated over~$\Z$. We remark here that before~\cite{mihara},
Rost's example was the only example known to us
in the literature of a non-sheafy ring.
The key idea of the following counterexample is basically Rost's.

Now, let $R$ be the ring $k[T,T^{-1},Z]/(Z^2)$
and let 
$R_0$ denote the $\Ok$-submodule of~$R$ with $\Ok$-basis 
$\varpi^{|n|}T^n$ and $\varpi^{-|n|}T^nZ$ ($n\in\Z$).
(For the avoidance of doubt, here $|\cdot|$ denotes the ordinary
absolute value on $\Z$.)
One checks easily that $R_0$ is an $\Ok$-subalgebra of $R$ and that $kR_0=R$.
We note in passing that $R_0$ is
not Noetherian -- indeed, the ideal $ZR\cap R_0$ of $R_0$ is easily
checked to be not finitely-generated. However, $Z\in R_0$ is nilpotent
and $R_0/(Z)$ is Noetherian.

\begin{proposition}
For the space $X:=\Spa(R,R^\circ)$ the presheaf $\OX$
is not a sheaf. In particular, $X$ is covered by $U:=\{x\in X:|T(x)|\leq 1\}$
and $V:=\{x\in X:|T(x)|\geq1\}$ and the map $\OX(X)\to\OX(U)\oplus\OX(V)$
is not injective.
\end{proposition}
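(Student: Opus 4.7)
My plan is to exhibit the element $Z \in R$ as an explicit witness that $\OX(X) \to \OX(U) \oplus \OX(V)$ fails to be injective: I will show that the image of $Z$ in $\Rhat = \OX(X)$ is nonzero, while its images in $\Ahat = \OX(U)$ and $\Bhat = \OX(V)$ both vanish. Note that since $T$ is already a unit in $R$, the localization map $\phi$ is the identity on abstract rings, so $A_0 = R_0[T]$ and $B_0 = R_0[T^{-1}]$.

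The first step is to see that $i(Z) \neq 0$ in $\Rhat$. A direct inspection of the defining $\Ok$-basis of $R_0$ shows that the coefficient of $T^0 Z$ in any element of $\varpi^n R_0$ lies in $\varpi^n\Ok$, so $Z \notin \varpi R_0$; the same inspection gives $\bigcap_n \varpi^n R_0 = 0$, whence $R \to \Rhat$ is injective and $i(Z) \neq 0$.

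The main step is to show that $Z$ lies in the closure of $\{0\}$ inside both $A$ and $B$. Since (as noted at the start of the proof of Lemma~\ref{basic}(i)) this closure equals $\bigcap_n \varpi^n A_0$, respectively $\bigcap_n \varpi^n B_0$, it suffices to verify $\varpi^{-n} Z \in A_0 \cap B_0$ for every $n \geq 0$. The trick is that the defining basis elements $\varpi^{-n} T^{-n} Z$ and $\varpi^{-n} T^n Z$ both already lie in $R_0$ by construction; multiplying the first by $T^n \in A_0$ and the second by $T^{-n} \in B_0$ produces $\varpi^{-n} Z$ in $A_0$ and in $B_0$ respectively.

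Combining the two steps gives the proposition. Conceptually, the large negative $\varpi$-valuations permitted on the $Z$-coefficients of $R_0$ are absorbed by monomials in $T^{\pm 1}$ as soon as one adjoins either $T$ or $T^{-1}$, whereas inside $R_0$ itself no such cancellation is available at the $T^0 Z$ slot, which is exactly why $Z$ survives in $\Rhat$ but dies in $\Ahat$ and $\Bhat$. I do not anticipate any substantive obstacle beyond the coefficient bookkeeping.
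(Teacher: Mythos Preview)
Your proposal is correct and follows essentially the same approach as the paper: exhibit $Z$ as a nonzero element of $\Rhat$ that dies in both $\Ahat$ and $\Bhat$, using the factorizations $\varpi^{-n}Z=(\varpi^{-n}T^{-n}Z)\cdot T^n\in R_0[T]$ and $\varpi^{-n}Z=(\varpi^{-n}T^nZ)\cdot T^{-n}\in R_0[T^{-1}]$. The only cosmetic difference is that the paper argues $i(Z)\neq0$ directly from $kZ\cap R_0=\Ok Z$ (i.e.\ $Z\notin\varpi R_0$) rather than also verifying $\bigcap_n\varpi^nR_0=0$, which is correct but unnecessary for the conclusion.
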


Before we begin the proof, we briefly
note two consequences. Firstly this proposition (positively)
resolves the footnote just before Definition~2.16 in~\cite{scholze:perfectoid}.
Secondly, $R$ is Noetherian, but it cannot be strongly
Noetherian because $\OX$ is a sheaf for strongly Noetherian Tate
$k$-algebras by Theorem~2.2 of~\cite{huber:generalization}.

\begin{proof}
That $X$ is covered by the opens~$U$ and $V$ is obvious. By definition,
$\OX(X)=\varprojlim_n R/\varpi^nR_0$, the completion of~$R$. Similarly,
$\OX(U)=\varprojlim_n R/\varpi^nR_0[T]$ and
$\OX(V)=\varprojlim_n R/\varpi^nR_0[T^{-1}]$. We claim that the
map $\OX(X)\to\OX(U)\oplus\OX(V)$ is not injective, and this
suffices to show that $\OX$ is not even a sheaf of abelian groups
on~$X$. 

More precisely, we claim that $0\not=Z\in\OX(X)$ but that $Z$
restricts to zero in both~$U$ and~$V$. To verify the first assertion
it suffices to observe that $kZ\not\subseteq R_0$, which is
clear because $kZ\cap R_0=\Ok Z$. To verify the second assertion it suffices
to check that $kZ\subseteq R_0[T]$ and $kZ\subseteq R_0[T^{-1}]$;
but both of these are also clear because for $n\geq0$ we have
$\varpi^{-n}Z=\varpi^{-n}T^{-n}Z.T^n\in R_0[T]$ and
$\varpi^{-n}Z=\varpi^{-n}T^nZ.T^{-n}\in R_0[T^{-1}]$.
\end{proof}

Note that $\OX(U)$ is the completion of $k[T,T^{-1}]$ with
respect to the topology generated by the subring $\Ok[T,\varpi T^{-1}]$
so in fact $U$ is isomorphic to the adic space associated to the
annulus $\{|\varpi|\leq |T|\leq 1\}$. Similarly $V$ is isomorphic
to the adic space associated to the annulus $\{1\leq |T|\leq|\varpi|^{-1}\}$;
however, $X$ is not the adic space associated to the annulus
$\{|\varpi|\leq |T|\leq |\varpi|^{-1}\}$ as $\OX(X)$ contains nilpotents.

\subsection{A non-perfectoid, locally perfectoid space.}

In this subsection we assume the characteristic of~$k$ is~$p$,
and that $k$ is perfectoid (or equivalently that $k$ is perfect).
In this situation we can basically ``perfectify'' our previous example,
and in this way construct an affinoid pre-adic space which is
not adic (and in particular not perfectoid), but which is locally
perfectoid. In particular we resolve Conjecture~2.16 of~\cite{scholze:survey}
(negatively).

The details are as follows. We start by perfectifying the ring $k[T,T^{-1}]$,
that is, we take the direct limit $\varinjlim_{x\mapsto x^p}k[T,T^{-1}]$;
we call this ring $k[T^{1/p^{\infty}},T^{-1/p^\infty}]$. We then adjoin
a nilpotent by setting $R=k[T^{1/p^\infty},T^{-1/p^\infty}][Z]/(Z^2)$.
Then $R$ has a $k$-basis consisting of elements of the form $T^n$
and $T^nZ$ for $n\in\Z[1/p]$. We let $R_0$ denote the $\Ok$-submodule
of~$R$ with basis $\varpi^{|n|}T^n$ and $\varpi^{-|n|}T^nZ$ ($n\in\Z[1/p]$).
Topologize~$R$ as usual
by letting subsets of the form $r+aR_0$ ($r\in R, a\in k^\times$)
be a basis.

\begin{proposition}
The space $X:=\Spa(R,R^\circ)$ is not an adic space,
because $\OX$ is not a sheaf. However $X=U\cup V$ with
$U:=\{x\in X\,:\,|T(x)|\leq 1\}$ and $V:=\{x\in X:\,|\,T(x)|\geq1\}$
both perfectoid spaces.
\end{proposition}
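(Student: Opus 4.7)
The plan is to mirror the preceding proposition for the non-sheafiness assertion and then identify $\OX(U)$ and $\OX(V)$ with explicit perfectoid Tate $k$-algebras. The cover $X=U\cup V$ is immediate. To see that $\OX$ is not a sheaf I will show that $Z$ descends to a nonzero element of $\OX(X)=\Rhat$ which restricts to zero in both $\OX(U)$ and $\OX(V)$. Non-vanishing in $\Rhat$ follows from $R_0\cap kZ=\Ok Z$, read off the explicit basis of~$R_0$. For restriction to zero in $\OX(U)$ the identity
\[
\varpi^{-N}Z = (\varpi^{-N}T^{-N}Z)\cdot T^N
\]
exhibits $\varpi^{-N}Z\in R_0[T]$ for every integer $N\geq0$, since $\varpi^{-|-N|}T^{-N}Z$ is a basis element of $R_0$; hence $Z\in\bigcap_N\varpi^NR_0[T]$. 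The symmetric identity using $\varpi^{-N}T^NZ\in R_0$ handles $\OX(V)$.

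For perfectoidness of $U$, I will identify $\OX(U)$ explicitly. Write $S:=R/(Z)=k[T^{1/p^\infty},T^{-1/p^\infty}]$ and let $S_0$ be the image of $A_0:=R_0[T]$ in~$S$. Since $A_0$ is homogeneous for the $\Z[1/p]$-grading of $R$ by $T$-exponent, Lemma~\ref{grading} together with a short computation shows that the $r$-th homogeneous component of $S_0$ equals $\varpi^{\delta(r)}\Ok T^r$, where $\delta(r):=\min_{m\in\Z_{\geq0}}|r-m|$ satisfies $\delta(r)\leq 1/2$ for $r\geq 0$ and $\delta(r)=-r$ for $r<0$. Homogeneity then forces $\bigcap_N\varpi^NS_0=0$, so the kernel of $R\to\OX(U)$ is exactly $(Z)$ and $\OX(U)$ is the completion of the pair $(S,S_0)$. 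Introducing $P_0:=\Ok[T^{1/p^\infty},(\varpi T^{-1})^{1/p^\infty}]\subseteq S$, whose $r$-th homogeneous component is $\Ok T^r$ for $r\geq0$ and $\varpi^{-r}\Ok T^r$ for $r<0$, the comparison yields $\varpi P_0\subseteq S_0\subseteq P_0$. The two integer rings are therefore commensurable, have the same $\varpi$-adic completion, and
\[
\OX(U)\cong k\langle T^{1/p^\infty},(\varpi T^{-1})^{1/p^\infty}\rangle,
\]
the standard perfectoid $k$-algebra of the closed annulus $\{|\varpi|\leq|T|\leq1\}$. By symmetry $\OX(V)\cong k\langle T^{-1/p^\infty},(\varpi T)^{1/p^\infty}\rangle$.

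The main obstacle is the commensurability $\varpi P_0\subseteq S_0\subseteq P_0$: everything rests on the bound $\delta(r)\leq 1/2$, which is precisely what absorbs the irregular coefficients $\varpi^{\delta(r)}$ defining $S_0$ into a single power of~$\varpi$. Once the identification is in hand, perfectoidness of the target is standard: uniformity is visible from the homogeneous description of $S^\circ$ via Lemma~\ref{grading}, Frobenius surjectivity on $S^\circ/\varpi$ is immediate from the presence of all $p$-power roots of $T$ and of $\varpi$, and Scholze's characteristic-$p$ criterion (Proposition~5.9 of~\cite{scholze:perfectoid}) then applies.
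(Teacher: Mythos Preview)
Your argument is correct and follows the same line as the paper: exhibit $Z$ as a nonzero global section that dies on $U$ and $V$, then identify $\OX(U)$ with the perfectoid annulus $k\langle T^{1/p^\infty},(\varpi T^{-1})^{1/p^\infty}\rangle$. The paper simply asserts this identification, whereas you supply the details via the commensurability $\varpi P_0\subseteq S_0\subseteq P_0$ coming from the bound $\delta(r)\leq 1/2$; this is a welcome elaboration rather than a different route (the one stray invocation of Lemma~\ref{grading} in computing $S_0$ is unnecessary---gradedness of $S_0$ is immediate from that of $A_0$---but harmless).
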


\begin{proof}
We have $\varpi^{-1}Z\not\in R_0$ and hence
$Z\not=0$ in $\OX(X)$. But as before $kZ\subseteq R_0[T]$
and $kZ\subseteq R_0[T^{-1}]$, and hence $Z$ restricts to zero
on both~$U$ and~$V$, so again $\OX$ is not a sheaf. 

Next observe that the completion of $R$ with respect to the
basis given by $r+aR_0[T]$, $r\in R$, $a\in k^\times$, is
equal to the completion of $k[T^{1/p^\infty},T^{-1/p^\infty}]$
with respect to the topology defined by the subring
$\Ok[T^{1/p^\infty},(\varpi/T)^{1/p^\infty}]$ (that is, the
direct limit of $\Ok[T,\varpi/T]$ via $x\mapsto x^p$);
from this we deduce that $U$ is the $p$-finite affinoid perfectoid space
associated to the annulus $\{|\varpi|\leq |T|\leq 1\}$; similarly $V$
is perfectoid. 
\end{proof}

Scholze (personal communication) observes that $R^\circ$ in the lemma
above is not bounded (as it contains the line $kZ$) and asks
whether his Conjecture~2.16 becomes true under the additional assumption
that the ring is uniform. Explicitly, if $A$ is uniform and complete,
and $\Spa(A,A^+)$ has a cover by rational subsets which are perfectoid,
is~$A$ perfectoid? One might also ask whether the conjecture becomes true
if~$A$ is assumed stably uniform, where the question becomes more
accessible -- indeed we resolved this in the characteristic~$p$
case in Corollary~\ref{216incharp}, and perhaps minor modifications
of these arguments will also deal with the general case.

\subsection{An affinoid pre-adic space with a non-nilpotent locally zero element.}

We have seen examples of global sections of affinoid pre-adic $k$-spaces
which are non-zero but locally zero. The examples we have seen
so far were nilpotent, which is perhaps not surprising:
by Corollary~\ref{locallyzeroimpliestopnilp} any such example
has to be topologically nilpotent. Here we give an example
of a section which is locally zero but genuinely not nilpotent.

Set $R=k[T,T^{-1},Z]$ and let $R_0$ be the $\Ok$-subalgebra generated
by $\varpi T$, $\varpi T^{-1}$, and for $n\geq1$ the elements
$\varpi^{-n}T^{a(n)}Z$ and $\varpi^{-n}T^{-b(n)}Z$,
where $a(n)$ and $b(n)$ are two sequences
of positive integers both tending to infinity rapidly. More
precisely, the following will suffice: set $a(1)=1$ and
then for $J\geq1$ ensure that
$$b(J)>J^2+J\max\{b(j):1\leq j<J,\;a(i):1\leq i\leq J\}$$
and for $I\geq2$ ensure that
$$a(I)>I^2+I\max\{b(j):1\leq j<I,\;a(i):1\leq i<I\}.$$
The sequence $a(1),b(1),a(2),b(2),\ldots$ can be constructed
recursively such that these inequalities are satisfied.

\begin{proposition}
Let $X=\Spa(R,R^\circ)$. Then $Z\in\OX(X)$ is not nilpotent
but vanishes on the subsets $U:=\{x\,:\,|T(x)|\leq1\}$ and
$V:=\{x\,:\,|T(x)|\geq1\}$ that cover~$X$.
\end{proposition}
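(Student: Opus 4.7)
The plan is to verify two assertions: (1) $Z$ restricts to zero on $U$ and $V$, and (2) $Z$ is not nilpotent in $\OX(X)=\Rhat$. Part (1) is the easier of the two and follows the pattern of the previous examples. Since $\OX(U)$ is the completion of $R$ with topology defined by $R_0[T]$, it suffices to show $kZ\subseteq R_0[T]$. For each $n\geq 1$ we have $\varpi^{-n}T^{-b(n)}Z\in R_0$ by construction and $T^{b(n)}\in R_0[T]$, so multiplying gives $\varpi^{-n}Z\in R_0[T]$; symmetrically, $\varpi^{-n}T^{a(n)}Z\in R_0$ combined with $T^{-a(n)}\in R_0[T^{-1}]$ yields $\varpi^{-n}Z\in R_0[T^{-1}]$. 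Hence $Z|_U=Z|_V=0$.

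For Part (2), the key idea is to exploit the bi-grading on $R=k[T,T^{-1},Z]$ by $\Z\times\Z$ in which $T^jZ^k$ has bi-degree $(j,k)$. Every generator of $R_0$ is homogeneous, so $R_0$ is bi-graded; in particular the piece in bi-degree $(0,k)$ is an $\Ok$-submodule of $kZ^k$, hence of the form $\varpi^{M_k}\Ok\cdot Z^k$ for some $M_k\in\Z\cup\{-\infty\}$. Computing the exponent of $\varpi$ in an arbitrary product of generators landing in bi-degree $(0,k)$ (using $A$ copies of $\varpi T$ and $B$ copies of $\varpi T^{-1}$ with $A+B$ minimal to balance the $T$-degree) gives
$$M_k=\inf_{s+t=k,\ n_i,m_j\geq 1}\left(-\sum_i n_i-\sum_j m_j+\left|\sum_i a(n_i)-\sum_j b(m_j)\right|\right),$$
and $Z^k\neq 0$ in $\Rhat$ is equivalent to $M_k>-\infty$.

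The main obstacle is to prove $M_k>-\infty$ for every $k$, and this is precisely the purpose of the rapid-growth conditions. Fix $k$ and a multiset of indices, let $N$ be its maximum, and let $s_N,t_N$ denote the multiplicities of $N$ among the $n_i$ and $m_j$ respectively (so $s_N+t_N\geq 1$). The growth hypotheses immediately give $a(N-1),b(N-1)<a(N)/N$ and $a(N)<b(N)/N$, and separating into the three cases according to whether $N$ occurs only among the $n_i$'s, only among the $m_j$'s, or both, a short estimate shows $\left|\sum a(n_i)-\sum b(m_j)\right|\geq\tfrac{1}{2}\max(s_N a(N),t_N b(N))$ once $N$ is large enough compared to $k$. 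Since $a(N),b(N)>N^2$, this dominates the penalty $\sum n_i+\sum m_j\leq kN$ and the total exponent is nonnegative. Only finitely many multisets have small $N$, so they contribute only a bounded negative amount to the infimum; hence $M_k>-\infty$, and $Z^k\neq 0$ in $\Rhat$ for all $k\geq 1$.
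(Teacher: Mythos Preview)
Your argument is correct and follows essentially the same strategy as the paper's: use the $\Z\times\Z$ grading to reduce to monomial products of generators landing in bidegree $(0,k)$, then exploit the rapid-growth hypotheses to show that the $T$-exponent (and hence the positive $\varpi$-contribution from the $\varpi T^{\pm 1}$ factors needed to cancel it) dominates the negative $\varpi$-exponent coming from the $\alpha$'s and $\beta$'s whenever a large index is present. The paper organizes the endgame slightly more cleanly---it splits directly on whether every index is $\le e$ (your $k$) or some index exceeds $e$, and in the latter case compares the largest $\alpha$-index $I$ with the largest $\beta$-index $J$, obtaining the explicit bound $|\lambda|\le|\varpi|^{-e^2}$---but your three-case split on where the maximum index $N$ occurs, together with the finiteness of multisets with bounded $N$, reaches the same conclusion.
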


\begin{proof}
By construction $\varpi^{-n}Z\in R_0[T]$ and
$\varpi^{-n}Z\in R_0[T^{-1}]$ for all $n\geq1$, so $Z$ is is zero on
$U$ and~$V$. To see that $Z$ is not
nilpotent on $\Spa(R,R^\circ)$ we need to show that $Z^e$ is non-zero
in the completion $\Rhat$ of $R$ for any $e\geq1$,
so we need to verify that for all
$e\geq1$ there exists
some $M(e)\geq0$ such that $\varpi^{-M(e)}Z^e\not\in R_0$.

The ring $R_0$ is graded by $\Z\times\Z$ (the powers of $T$ and $Z$)
and the given generators of $R_0$ are homogeneous. It follows from this
that if $\varpi^{-M}Z^e\in R_0$ then
$\varpi^{-M}Z^e$ will be an $\Ok$-linear sum of products of the
given generators, where each of these products
is of the form $\lambda Z^e$ (with $\lambda\in k^\times$).
So it suffices to check that for any
$e\geq1$ there
exists some bound
$M(e)\geq0$ such that if $\lambda Z^e$ is a product
of the given generators of $R_0$ then $|\lambda|<|\varpi|^{-M(e)}$.

Set $\alpha_n=\varpi^{-n}T^{a(n)}Z$ and $\beta_n=\varpi^{-n}T^{-b(n)}Z$.
Say $\lambda Z^e$ is a product of the given generators of $R_0$, and
let us consider which $\alpha_i$ and $\beta_j$ occur in this product.
There are two
cases. If the product mentions only $\alpha_i$ and $\beta_j$ for $i,j\leq e$,
then (because of the coefficient of $Z$) the product can mention only $e$ such
elements, so
$|\lambda|\leq|\varpi^{-e^2}|$. If, however, the product mentions some $\alpha_i$
or $\beta_i$ with $i>e$ then we claim that $|\lambda|\leq 1$, and it
suffices to prove this claim.
Let $I$ denote the largest $i$ such that $\alpha_i$ is mentioned
(with $I=0$ if no $\alpha_i$ are mentioned),
and let $J$ denote the largest $j$ such that $\beta_j$ is mentioned
(with $J=0$ if no $\beta_j$ is mentioned). 
Write $\lambda Z^e=(\varpi T)^\ell(\varpi T^{-1})^m (\varpi^{-\mu} T^\nu Z^e)$,
with $\varpi^{-\mu}T^{\nu}Z^e$ a product of $\alpha$s and $\beta$s.
If $I\leq J$ then because $b(J)>J^2+J\max\{b(j):j<J,\;a(i):i\leq J\}$
we see that $|\nu|\geq b(J)-(e-1)\max\{b(j):j<J,\;a(i):i\leq J\}>J^2$
whereas $\mu\leq eJ<J^2$, and because one of $\ell$ and $m$
must be at least as big as $|\nu|$ to kill the power of~$T$,
we see $|\lambda|\leq 1$. A similar argument works in the case $I>J$,
this time using the defining property of $a(I)$.
\end{proof}

\subsection{Exactness failing in the middle.}

Scholze (personal communication) asked whether one could construct
an example of an affinoid pre-adic $k$-space for which $\OX$
fails to be a sheaf for a reason other than the existence of
sections which are locally zero but non-zero. Here is such
an example -- an example where glueing fails.

If $R$ is a Tate $k$-algebra that contains a $k$-basis $\{r_1,r_2,\ldots\}$
and there exist non-negative integers $\{n_1,n_2,\ldots\}$ with the
property that the $r_i$ are an $\Ok$-basis for $R^\circ$, and $\varpi^{n_i}r_i$
are an $\Ok$-basis for $R_0$, then $(\Rhat)^\circ$ contains no line,
because $(\Rhat)^\circ=\widehat{(R^\circ)}$ by Lemma~\ref{basic}(iii).
So Corollary~\ref{locallyzeroimpliestopnilp} implies
that if $X=\Spa(R,R^\circ)=U\cup V$ then the map $\OX(X)\to\OX(U)\oplus\OX(V)$
must be injective (as the kernel is a $k$-vector space all of whose
elements are power-bounded). Here however is an example where
$\OX(X)\to\OX(U)\oplus\OX(V)\to\OX(U\cap V)$ is not exact -- there
are global sections of $U$ and~$V$ which agree on $U\cap V$ but
which do not glue together to give a section on $U\cup V$. 

Set $R=k[T,T^{-1},Z_1,Z_2,\ldots]$ and let $R_0$ be the free $\Ok$-submodule
of $R$ generated by elements $\varpi^d T^a \prod_i Z_i^{e_i}$ with
$d,a\in\Z$ and $e_1,e_2,\ldots\in\Z_{\geq0}$ satisfying
\begin{enumerate}[(i)]
\item if $\sum_i e_i=0$ then $d=|a|$;
\item if $\sum_i e_i=1$ then $d=|a|-2\min\{\sum_i ie_i,|a|\}$;
\item if $\sum_i e_i\ge2$ then $d=|a|-2\sum_i ie_i$.
\end{enumerate}
It is easily checked that the product of two such generators is in $R_0$ and
that $1\in R_0$, so $R_0$ is a ring. It is clear that $kR_0=R$.
Note that $\varpi T$ and $\varpi T^{-1}$ are in $R_0$ but not in $\varpi R_0$.
Set $U=\{x\in X\,:\,|T(x)|\leq1\}$ and $V=\{x\in X\,:\,|T(x)|\geq1\}$ as usual;
set $A=B=R$ and topologize them using $A_0=R_0[T]$ and $B_0=R_0[T^{-1}]$,
so $\OX(U)=\Ahat$ and $\OX(V)=\Bhat$.

\begin{proposition}
$\widehat{(R^\circ)}$ contains no line.
\end{proposition}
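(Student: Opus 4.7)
The plan is to exploit a torsion-free grading on $R$ in order to reduce the boundedness question to one-dimensional ($k$-line) pieces. Let $G = \Z \oplus \bigoplus_{i \geq 1} \Z$ be the free abelian group of formal exponents of $T, Z_1, Z_2, \ldots$, and grade $R$ by assigning $T^a \prod_i Z_i^{e_i}$ the degree $(a, e_1, e_2, \ldots)$. The given $\Ok$-generators of $R_0$ are homogeneous in this grading, so $R_0$ is graded; by Lemma~\ref{grading}, so is $R^\circ$. For each $g \in G$, the homogeneous piece $R^{(g)} = k \cdot m_g$ is a single $k$-line spanned by a monomial $m_g$, and the crux is to show that $(R^\circ)^{(g)} := R^\circ \cap R^{(g)}$ is a \emph{bounded} $\Ok$-submodule of $R^{(g)}$, of the form $\varpi^{\ell(g)} \Ok \cdot m_g$ for some $\ell(g) \in \Z$. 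Indeed, $c m_g \in R^\circ$ iff some fixed $N$ satisfies $N + n v_\varpi(c) \geq d_0(n g)$ for all $n \geq 0$, where $d_0(ng)$ is the exponent prescribed by conditions (i)--(iii); since $d_0(ng)$ is linear in $n$ for $n$ large (case (iii) holds from some $n$ onwards whenever $g \neq 0$ and involves a $Z_i$, and cases (i),(iii) are already linear), the $n\to\infty$ asymptotics pin down the threshold $\ell(g)$ uniquely, and the finitely many small $n$ are absorbed into $N$.

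Next, the projection $\pi_g : R \to R^{(g)}$ is continuous because the grading of $R_0$ gives $\pi_g(\varpi^n R_0) \subseteq \varpi^n R_0 \cap R^{(g)}$, so $\pi_g$ extends to a continuous $\hat\pi_g : \Rhat \to \widehat{R^{(g)}}$. The subspace topology on $R^{(g)} \cong k$ is the usual $\varpi$-adic topology (with basis $\varpi^{n+d_0(g)} \Ok$), so $\widehat{R^{(g)}} = k \cdot m_g$, and $\varpi^{\ell(g)} \Ok \cdot m_g$ is closed therein. By continuity, $\hat\pi_g(\widehat{R^\circ}) \subseteq \varpi^{\ell(g)} \Ok \cdot m_g$. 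Now suppose for contradiction that $k \hat r \subseteq \widehat{R^\circ}$ for some $\hat r \neq 0$. Then for every $g$ one has $k \cdot \hat\pi_g(\hat r) \subseteq \varpi^{\ell(g)} \Ok m_g$, a bounded subset of $k m_g$; since a nonzero $k$-line in $k m_g$ is unbounded, we must have $\hat\pi_g(\hat r) = 0$ for every $g$.

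It remains to conclude $\hat r = 0$ from the vanishing of all $\hat\pi_g(\hat r)$. Representing $\hat r$ by a compatible family $(r_n + \varpi^n R_0)_n$ with $r_n \in R$, the condition $\hat\pi_g(\hat r) = 0$ reads $\pi_g(r_n) \in \varpi^n R_0 \cap R^{(g)}$ for every $g$; summing over the finite support of $r_n$ and using that $\varpi^n R_0$ is itself graded gives $r_n \in \varpi^n R_0$ for every $n$, hence $\hat r = 0$, contradicting $\hat r \neq 0$. The main point that needs care is the determination of $\ell(g)$: case (ii) mixes a $\min$ into the formula for $d_0$, but since $ng$ sits in case (iii) as soon as $n \geq 2$ and $\sum_i e_i \geq 1$, the anomaly affects at most the single term $n=1$ and is absorbed into $N$, so the asymptotic threshold $\ell(g)$ is unambiguously defined and the rest of the argument is purely formal.
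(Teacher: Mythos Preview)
Your proof is correct and follows essentially the same route as the paper: both use Lemma~\ref{grading} to reduce to graded pieces, identify each $(R^\circ)^{(g)}$ as a bounded $\Ok$-lattice $\varpi^{\ell(g)}\Ok\cdot m_g$ inside the line $R^{(g)}$, and then conclude via the coordinate projections. The paper compresses the last two steps into a one-line appeal to the general remark made just before the proposition (that if $R^\circ$ and $R_0$ share a monomial $\Ok$-basis up to $\varpi$-shifts then $\widehat{R^\circ}$ contains no line) together with an explicit description of $R^\circ$; your continuous-projection argument is precisely an unpacking of that remark.
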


\begin{proof}
By Lemma~\ref{grading}, $R^\circ$ is graded by the degrees of $T,Z_1,Z_2,\ldots$.
It is easy to check that
$\varpi^d T^a \prod_i Z_i^{e_i}$ is in $R^\circ$ iff
\begin{enumerate}[(i)]
\item if $\sum_i e_i=0$ then $d\ge|a|$;
\item if $\sum_i e_i\geq1$ then $d\ge|a|-2\sum_i ie_i$.
\end{enumerate}
This, together with the arguments above, shows that $\widehat{(R^\circ)}$
contains no line.
\end{proof}

Note that for all $n\geq1$ we have
$\varpi^{-n}Z_n=\varpi^{-n}T^{-n}Z_n.T^n\in R_0[T]$ and similarly
$\varpi^{-n}Z_n\in R_0[T^{-1}]$ but $\varpi^{-1}Z_n\not\in R_0$, so
$\varpi^{n-1}\left(R_0[T]\cap R_0[T^{-1}]\right)\not\subseteq R_0$ for every
$n\geq1$ and hence the map $R\to A\oplus B$ is not strict.

Now because $Z_n\in\varpi^{n}A_0$ we have that $\sum_i Z_i$ converges in
$\Ahat$; let $a$ be the limit. Similarly it converges in $\Bhat$;
let $b$ be the limit.

\begin{proposition}
$a\in\Ahat$ and $b\in\Bhat$ agree on $U\cap V$, but cannot be glued to
an element of $\Rhat$.
\end{proposition}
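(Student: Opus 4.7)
The plan has two parts. First, I verify that $a$ and $b$ have the same image in $\OX(U\cap V)=\Chat$: the partial sums $\sum_{m=1}^N Z_m$ are Cauchy in each of the $A$-, $B$-, and $C$-topologies (since $Z_m\in\varpi^m A_0\cap\varpi^m B_0$ and $A_0,B_0\subseteq C_0$), so by continuity of the restriction maps $\Ahat\to\Chat$ and $\Bhat\to\Chat$, both $a$ and $b$ map to the common limit $c:=\sum_m Z_m\in\Chat$.

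The harder part is that no $r\in\Rhat$ can lie over the pair $(a,b)$. The intuition is that the only candidate is the formal sum $\sum_m Z_m$, which fails to converge in the $R$-topology because $Z_m\notin\varpi R_0$ for any~$m$. To make this rigorous I would use the $\Z_{\geq 0}$-grading of $R$ by total $Z$-degree. The given generators of $R_0$ are homogeneous for this grading, and the same is true of the generators of $A_0=R_0[T]$ and $B_0=R_0[T^{-1}]$, so each of $R_0$, $A_0$, $B_0$ decomposes as a direct sum of its homogeneous pieces. Consequently the projection to the $Z$-degree-$d$ component is continuous in each of the three topologies, extends to $\Rhat$, $\Ahat$, $\Bhat$, and commutes with the restriction maps. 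Since $a$ and $b$ are of $Z$-degree one, I may replace $r$ by its degree-one projection and reduce to the case $r\in\widehat{R^{(1)}}$ with $r\mapsto a$ in $\widehat{A^{(1)}}$ and $r\mapsto b$ in $\widehat{B^{(1)}}$.

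Now $R^{(1)}=\bigoplus_{m\geq 1}k[T,T^{-1}]\cdot Z_m$, with $R_0^{(1)}=\bigoplus_m M_m\cdot Z_m$ where $M_m\subseteq k[T,T^{-1}]$ is the $\Ok$-submodule with basis $\{\varpi^{d(a,m)}T^a:a\in\Z\}$ for $d(a,m)=|a|-2\min(m,|a|)$. An element of $\widehat{R^{(1)}}$ is then a family $(s_m)_{m\geq 1}$, with $s_m$ in the $M_m$-completion $\widehat{V}_m^R$ of $k[T,T^{-1}]$, satisfying the convergence condition that for each $N\geq 0$, cofinitely many $m$ have $s_m\in\varpi^N\widehat{M_m}$. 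Projecting the equality $r\mapsto a$ further to the $Z_m$-coordinate forces the image of $s_m$ in the analogous $A$-completion $\widehat{V}_m^A$ to equal $1$. Since both $\widehat{V}_m^R$ and $\widehat{V}_m^A$ embed into $k^\Z$ as formal Laurent series $\sum_a c_a T^a$ subject to tail-decay conditions on the $c_a$ (the $A$-decay being weaker because $d_A(a,m):=\min_{a'\leq a}d(a',m)\leq d(a,m)$), this embedding is injective on coefficients, and we conclude $s_m=1\in k[T,T^{-1}]$ for every $m$.

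The contradiction is then immediate: since $d(0,m)=0$, the constant polynomial $1$ lies in $M_m$ but not in $\varpi M_m$ for any $m$, so the convergence condition for $\widehat{R^{(1)}}$ fails at $N=1$. The one technically fiddly step, and the main obstacle, is justifying that projection to a fixed $Z$-degree, then to a $Z_m$-coordinate, then to a single Laurent coefficient in $T$ is well-defined, continuous, and compatible with the completion maps; this follows from the explicit direct-sum descriptions above together with the fact that $\Ok$ is $\varpi$-adically complete, but requires some careful bookkeeping to state cleanly.
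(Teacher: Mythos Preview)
Your proof is correct and rests on the same underlying idea as the paper's, namely that the ``$Z_n$-coefficient'' of any element of $\Rhat$ must tend to~$0$ as $n\to\infty$, whereas for $a$ it is identically~$1$. The execution differs in how directly this is extracted. You build up a structural decomposition in three stages: first project to total $Z$-degree~$1$, then split $\widehat{R^{(1)}}$ into $Z_m$-components $(s_m)$, and finally pass to formal Laurent coefficients in~$T$ to pin down each $s_m$ as the constant polynomial~$1$; the contradiction then comes from the failure of the tail condition on $(s_m)$. The paper bypasses all of this scaffolding by defining, for each $n$, a single continuous $k$-linear functional $\rho_n:R\to k$ (the coefficient of the monomial~$Z_n$) and its analogue $\alpha_n$ on~$A$, extending both to the completions, and then arguing directly from a Cauchy-sequence representative that $\widehat{\rho}_n(r)\to 0$ for every $r\in\Rhat$ while $\widehat{\alpha}_n(a)=1$; compatibility of $\widehat{\rho}_n$ and $\widehat{\alpha}_n$ through $\Rhat\to\Ahat$ finishes the argument. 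Your approach gives a more complete picture of what $\widehat{R^{(1)}}$ looks like, at the cost of the ``careful bookkeeping'' you flag; the paper's approach is shorter because it only needs the single scalar invariant per~$n$ and never has to describe the completions explicitly.
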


\begin{proof}
That $a$ and $b$ agree on $U\cap V$ is obvious, because the image of $a$
in $\Chat$ and the image of $b$ in $\Chat$ both are the limit of $\sum_i Z_i$
in~$\Chat$.

Let $r\in\Rhat$. There is a Cauchy sequence $r_1,r_2,\ldots$ in $R$ with limit~$r$.
For each $n\geq1$, let $\rho_n:R\to k$ be the map that sends an element of $R$ to
the coefficient of $Z_n$ of its $Z_n$ graded piece. This map is continuous and
therefore factors through a unique continuous map $\widehat{\rho}_n:\Rhat\to k$.
Similarly we define $\widehat{\alpha}_n:\Ahat\to k$.

We claim that $\widehat{\rho}_1(r),\widehat{\rho}_2(r),\ldots$ converges to zero.
Let $M\geq0$. There exists $I\geq1$ such that $r_i-r_j\in\varpi^MR_0$ for all $i,j\geq I$.
It follows that $\rho_n(r_i)-\rho_n(r_j)\in\varpi^M\Ok$  for all $i,j\geq I$ and all
$n\geq1$. Take $N\geq1$ such that none of $Z_N,Z_{N+1},\ldots$ occurs in $r_I$.
For all $n\geq N$ we have $\rho_n(r_I)=0$, so $\rho_n(r_i)\in\varpi^M\Ok$ for all
$i\geq I$, so $\widehat{\rho}_n(r)\in\varpi^M\Ok$. This concludes the proof that
$\widehat{\rho}_n(r)$ converges to zero.

It is easily seen that $\widehat{\alpha}_n(a)=1$ for all $n\geq1$. Since
$\widehat{\rho}_n$ and $\widehat{\alpha}_n$ are compatible through
$\Rhat\to\Ahat$, it follows that the image in $\Ahat$ of $r$ cannot be~$a$.
\end{proof}

\subsection{A uniform space with a subspace containing a line of power-bounded elements.}

We now give an example of a uniform space that is not stably uniform.
See also \cite{mihara}, which was written independently.

Consider the free $\Ok$-submodule $R_0$ of $k[T,T^{-1},Z]$ generated
by $(\varpi T)^a(\varpi Z)^b$ with $b\geq0$ and $a\geq-b^2$.
It is easily verified that $R_0$ is also an $\Ok$-subalgebra; indeed
if $a\geq-b^2$ and $a'\geq-(b')^2$ then
$a+a'\geq-b^2-(b')^2\geq-(b+b')^2$ if $b,b'\geq0$. Set $R=A=kR_0$
and topologize them using $R_0\subseteq R$ and $A_0=R_0[T]\subseteq A$.

\begin{proposition}
The affinoid $k$-algebra $(R,R^\circ)$ is uniform, but not stably uniform.
More specifically, $A^\circ$ contains the non-zero line~$kZ$.
\end{proposition}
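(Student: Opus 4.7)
The plan is to exploit the $(\Z \times \Z_{\geq 0})$-bigrading of $R = k[T, T^{-1}, Z]$ by degrees in $T$ and $Z$, which $R_0$ respects by construction. By Lemma~\ref{grading}, $R^\circ$ and $A^\circ$ are then bigraded as well, so identifying them reduces to deciding which monomials $\lambda T^a Z^b$ (with $\lambda \in k$, $a \in \Z$, $b \in \Z_{\geq 0}$) are power-bounded.

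For uniformity of $R$, I would show $R^\circ = R_0$; the inclusion $R_0 \subseteq R^\circ$ is automatic. Since $R_0$ is $\Ok$-free on the generators $(\varpi T)^a (\varpi Z)^b$ with $b \geq 0$ and $a \geq -b^2$, the $m$th power $\lambda^m T^{am} Z^{bm}$ can lie in a fixed $\varpi^{-N} R_0$ only if the bidegree $(am, bm)$ supports such a generator (the case $m = 1$ already pins down $a \geq -b^2$), and, as $m \to \infty$, the coefficient bound forces $|\lambda| \leq |\varpi|^{a+b}$. Together these are exactly the conditions for $\lambda T^a Z^b \in R_0$, so $R^\circ \subseteq R_0$ and $R$ is uniform.

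For the failure of stable uniformity, the key computation is that for each $m \geq 1$ the element $(\varpi T)^{-m^2}(\varpi Z)^m = \varpi^{-m^2 + m} T^{-m^2} Z^m$ is a generator of $R_0$ (take $a = -b^2$ with $b = m$). Multiplying by $T^{m^2} \in A_0 = R_0[T]$ clears the $T^{-m^2}$ to give $\varpi^{-m^2 + m} Z^m \in A_0$, so $Z^m \in \varpi^{m^2 - m} A_0 \subseteq A_0$ and thus $Z \in A^\circ$. For arbitrary $\lambda \in k$ the same computation puts $(\lambda Z)^m$ into a $\varpi$-power times $A_0$ whose exponent in $\varpi$ is a quadratic in $m$ with positive leading coefficient, hence bounded below; so $\lambda Z \in A^\circ$ for every $\lambda \in k$, $kZ \subseteq A^\circ$, and $A^\circ$ is unbounded. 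By Lemma~\ref{basic}(iii) the completion $\Ahat$ is then also not uniform, and since $\Ahat = \OX(U)$ for the rational subset $U = \{x : |T(x)| \leq 1\}$, this shows $(R, R^\circ)$ is not stably uniform.

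I foresee no serious obstacle: everything follows from the bigrading lemma and elementary manipulation of the defining generators. The conceptual point is that the ``quadratic budget'' $a \geq -b^2$ in the definition of $R_0$ is precisely calibrated so that, after raising to the $m$th power, the $T^{-m^2}$ in the critical generator can be absorbed by $T^{m^2} \in A_0$, liberating a whole $k$-line in $A^\circ$, while the quadratic (rather than merely linear) nature of the constraint keeps $R^\circ$ itself bounded.
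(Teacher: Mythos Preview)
Your proposal is correct and follows essentially the same approach as the paper: reduce to monomials via Lemma~\ref{grading}, verify $R^\circ=R_0$ directly, and then exploit the boundary generator $(\varpi T)^{-m^2}(\varpi Z)^m\in R_0$ together with $T^{m^2}\in A_0$ to put $kZ$ inside $A^\circ$ (the paper packages the same computation as $(\varpi^{-n}Z)^{n+1}\in A_0$ and then uses that $A^\circ$ is integrally closed). One small slip: you write $R=k[T,T^{-1},Z]$, but in fact $R=kR_0$ is a proper subring (for instance $T^{-1}\notin R$, since the $b=0$ generators require $a\geq 0$); this does not affect your argument, as $R$ is still bigraded by $T$- and $Z$-degree. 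You might also, as the paper does, note explicitly that $\varpi^{-1}Z\notin A_0$ (immediate from the grading: in bidegree $(0,1)$ one checks $A_0\cap kZ=\Ok Z$), so that the line $kZ$ is genuinely unbounded in $A$.
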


\begin{proof}
We claim that $R^\circ=R_0$. By Lemma~\ref{grading} it suffices to
check that for every $r=(\varpi T)^a(\varpi Z)^b\in R_0$
(with $b\geq0$ and $a\geq-b^2$) and $\lambda\in k$, if $\lambda r\in R^\circ$
then $\lambda\in\Ok$. An elementary calculation shows that it
then suffices to check that $\varpi^{-1}r^n\not\in R_0$ for any $n\geq1$,
and this is easily checked.

Note that $T^{-1}Z=(\varpi T)^{-1}(\varpi Z)\in R_0$ and hence $Z\in A_0$,
but $\varpi^{-1}Z\not\in A_0$ (this is not hard to see, using the
grading on $A_0$). 
However, for $n\geq1$ we have
$(\varpi^{-n}Z)^{n+1}=(\varpi T)^{-n^2-2n-1}\allowbreak(\varpi Z)^{n+1}\allowbreak T^{n^2+2n+1}\in A_0$
and hence $\varpi^{-n}Z\in A^\circ$ for all $n\geq1$.
\end{proof}

\subsection{A uniform affinoid space which is non-sheafy.}

Finally we give an example of a uniform affinoid $X$ over $k$
for which $\OX$ is not a sheaf. Let $R_0$ be the free $\Ok$-submodule
of $k[P,P^{-1},\allowbreak Q,Q^{-1},\allowbreak T,T^{-1},\allowbreak Z]$
generated by elements $\varpi^dP^pQ^qT^aZ^e$ with $d,p,q,a\in\Z$
and $e\in\Z_{\geq0}$ satisfying the following conditions:
\begin{enumerate}[(i)]
\item $d=\max\{p+q+a,p+q-a,p+a,q-a\}$;
\item if $e=0$ then $p\geq0$ and $q\geq0$;
\item if $e=1$ then $p\geq0$ or $q\geq0$.
\end{enumerate}
If $\varpi^{d_1}P^{p_1}Q^{q_1}T^{a_1}Z^{e_1}$ and $\varpi^{d_2}P^{p_2}Q^{q_2}T^{a_2}Z^{e_2}$
are two such elements, then $d_1\geq p_1+q_1+a_1$ and $d_2\geq p_2+q_2+a_2$ and hence $d_1+d_2\geq (p_1+p_2)+(q_1+q_2)+(a_1+a_2)$ and so on; from this it is not hard to see that the product of two $\Ok$-module generators
of $R_0$ is in $R_0$; moreover $1\in R_0$, and hence $R_0$ is a ring. Set $R=kR_0$.
\begin{proposition}
The affinoid $k$-algebra $(R,R^\circ)$ is uniform, but for the space $X:=\Spa(R,R^\circ)$
the presheaf $\OX$ is not  a sheaf.  In particular, $Z$ is non-zero on the subspace
$W:=\{x\in X\,:\,|P(x)|\leq 1\mbox{\rm\ and }|Q(x)|\leq1\}$ but vanishes on the
subspaces $U:=\{w\in W\,:\,|T(w)|\leq 1\}$ and $V:=\{w\in W\,:\,|T(w)|\geq 1\}$
that cover~$W$.
\end{proposition}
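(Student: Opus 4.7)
The plan is to exploit the $\Z^4$-grading of $R$ by the powers of $P, Q, T, Z$: the generators of $R_0$ listed in the definition are homogeneous, so $R_0$ is a graded subring, and Lemma~\ref{grading} reduces questions about $R^\circ$ to monomials of the form $\lambda P^p Q^q T^a Z^e$ with $\lambda\in k^\times$. I abbreviate $D(p,q,a) := \max\{p+q+a, p+q-a, p+a, q-a\}$, so that membership $\lambda P^p Q^q T^a Z^e \in R_0$ amounts to $|\lambda| \leq |\varpi|^{D(p,q,a)}$ together with the support condition appropriate to $e$: (ii) for $e=0$, (iii) for $e=1$, no extra condition for $e \geq 2$. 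For the uniformity claim I prove $R^\circ = R_0$: if such a monomial is power-bounded, the condition $(\lambda P^p Q^q T^a Z^e)^m \in \varpi^{-N} R_0$ for all $m \geq 0$, combined with the homogeneity $D(mp, mq, ma) = m D(p,q,a)$ and letting $m \to \infty$, forces $|\lambda| \leq |\varpi|^{D(p,q,a)}$; the $m = 1$ case forces the relevant support condition. Hence $R^\circ \subseteq R_0$, and $R$ is uniform.

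Next I show $Z$ is nonzero in $\OX(W)$ but zero in $\OX(U)$ and $\OX(V)$. Since $W$ is the rational subset with $f_1 = P$, $f_2 = Q$, $g = 1$, the ring $\OX(W)$ is the completion of $R$ with ring of definition $R_0[P, Q]$; similarly $\OX(U)$ and $\OX(V)$ have rings of definition $R_0[P, Q, T]$ and $R_0[P, Q, T^{-1}]$. The grading extends to each of these, so I can compute their $(0, 0, 0, 1)$-pieces. In $R_0[P, Q]$ a contribution is an $R_0$-generator of multi-degree $(-i, -j, 0, 1)$ multiplied by $P^i Q^j$ with $i, j \geq 0$; condition (iii) forces $i = 0$ or $j = 0$, and then (i) gives $D(-i, -j, 0) = 0$, so the piece is exactly $\Ok \cdot Z$, and $\varpi^{-1} Z$ is not in it. In $R_0[P, Q, T]$, for every $n \geq 1$ the monomial $\varpi^{-n} Q^{-2n} T^{-n} Z$ is an $R_0$-generator (because $D(0, -2n, -n) = -n$ and (iii) holds with $p' = 0$), and $\varpi^{-n} Z = (\varpi^{-n} Q^{-2n} T^{-n} Z) \cdot Q^{2n} T^n \in R_0[P, Q, T]$. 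In $R_0[P, Q, T^{-1}]$ the symmetric generator $\varpi^{-n} P^{-2n} T^{n} Z$ combined with $P^{2n} T^{-n}$ yields $\varpi^{-n} Z$. Hence $Z = 0$ in both $\OX(U)$ and $\OX(V)$.

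The main obstacle is the combinatorial interplay of conditions (i) and (iii): on $W$ the support constraint forces $D \geq 0$ on the $(0,0,0,1)$-piece and blocks $\varpi^{-1} Z$; on $U$ or $V$, the extra freedom to use $T$ or $T^{-1}$ introduces a term in the $\max$ of opposite sign which can be balanced against $-Q$ or $-P$, pushing $D$ arbitrarily negative. The asymmetric treatment of $T$ (no support constraint) versus $P, Q$ in the definition of $R_0$ is precisely what engineers this failure of sheafiness: $Z$ is a nonzero global section of $\OX(W)$ whose restrictions to the cover $W = U \cup V$ both vanish, so $\OX$ is not even a sheaf of abelian groups on $X$.
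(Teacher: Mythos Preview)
Your proof is correct and follows essentially the same approach as the paper's: both invoke Lemma~\ref{grading} to reduce the uniformity check to monomials and then verify $R^\circ=R_0$ via the positive homogeneity $D(mp,mq,ma)=mD(p,q,a)$; both exhibit the same explicit generator $\varpi^{-n}Q^{-2n}T^{-n}Z\in R_0$ (and its $P\leftrightarrow Q$, $T\leftrightarrow T^{-1}$ counterpart) to show $Z$ vanishes on $U$ and $V$; and both show $\varpi^{-1}Z\notin R_0[P,Q]$ by a graded-piece argument, the paper phrasing it as $\varpi^{-1}P^{-m}Q^{-n}Z\notin R_0$ for $m,n\geq0$ while you compute the $(0,0,0,1)$-piece of $R_0[P,Q]$ directly. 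Your write-up is more explicit than the paper's terse version, but there is no substantive difference in method.
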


\begin{proof}
One verifies using Lemma~\ref{grading} that $R^\circ=R_0$. Thus, $R$ is uniform.

The subspace $W$ has global sections given by the completion of the ring $A=R$
with respect to the topology defined by $A_0=R_0[P,Q]$.
Now we have $\varpi^{-n}Q^{-2n}T^{-n}Z\in R_0$, so
$\varpi^{-n}Z=\varpi^{-n}Q^{-2n}T^{-n}Z.Q^{2n}T^n\in A_0[T]$,
for all $n\geq0$.
Similarly $\varpi^{-n}Z\in A_0[T^{-1}]$ for all $n\geq0$ and we deduce that
$Z$ vanishes on the subspaces $U$ and~$V$.
However, $\varpi^{-1}P^{-m}Q^{-n}Z$ is not in $R_0$ for any $m,n\geq0$
(indeed for $m,n>0$ this is not even in $R$), so $\varpi^{-1}Z$ is not
in $A_0=R_0[P,Q]$ and so $Z$ is a non-zero function on~$W$.
\end{proof}

\bibliographystyle{amsalpha}
\providecommand{\bysame}{\leavevmode\hbox to3em{\hrulefill}\thinspace}
\providecommand{\MR}{\relax\ifhmode\unskip\space\fi MR }
% \MRhref is called by the amsart/book/proc definition of \MR.
\providecommand{\MRhref}[2]{%
  \href{http://www.ams.org/mathscinet-getitem?mr=#1}{#2}
}
\providecommand{\href}[2]{#2}

\vskip\baselineskip

\end{document}